\newtheorem{theorem}{Theorem}[section]
\newtheorem{proposition}[theorem]{Proposition}
\newtheorem{corollary}[theorem]{Corollary}
\newtheorem{claim}[theorem]{Claim}
\newtheorem{problem}[theorem]{Problem}
\newtheorem{example}[theorem]{Example}
\theoremstyle{definition}
\newtheorem{definition}[theorem]{Definition}
\newtheorem{remark}[theorem]{Remark}
\newcommand{\e}{\varepsilon}
\newcommand{\w}{\omega}
\newcommand{\IN}{\mathbb N}
\newcommand{\IR}{\mathbb R}
\newcommand{\IZ}{\mathbb Z}
\newcommand{\I}{\mathcal I}
\newcommand{\J}{\mathcal J}
\newcommand{\K}{\mathcal K}
\newcommand{\IC}{\mathbb C}
\newcommand{\HN}{\mathsf{HN}}
\newcommand{\HM}{\mathsf{HM}}
\newcommand{\NF}{\mathsf{NF}}
\newcommand{\add}{\mathrm{add}}
\newcommand{\cov}{\mathrm{cov}}
\newcommand{\non}{\mathrm{non}}
\newcommand{\cof}{\mathrm{cof}}
\title{Null-finite sets in topological groups and their applications}
\author{Taras Banakh and Eliza Jab\l o\'nska}
\address{T. Banakh: Ivan Franko National University of Lviv (Ukraine) and Jan Kochanowski University in Kielce (Poland)}
\email{t.o.banakh@gmail.com}
\address{E. Jab\l o\'nska: Department of Discrete Mathematics, Rzesz\'{o}w University of Technology (Poland)}
\email{elizapie@prz.edu.pl}
\subjclass{39B52, 28C10, 18B30, 54E52}
\keywords{Automatic continuity, additive function, mid-point convex function, null-finite set, Haar-null set, Haar-meager set}
\begin{document}
\begin{abstract} In the paper we introduce and study a new family of ``small'' sets which is tightly connected with two well known $\sigma$-ideals: of Haar-null sets and of Haar-meager sets. We define a subset $A$ of a  topological group $X$ to be {\em null-finite} if there exists a convergent sequence $(x_n)_{n\in\w}$ in $X$ such that for every $x\in X$ the set $\{n\in\w:x+x_n\in A\}$ is finite. We prove that each null-finite Borel set in a complete metric Abelian group is Haar-null and Haar-meager. The Borel restriction in the above result is essential as each non-discrete metric Abelian group is the union of two null-finite sets. Applying null-finite sets to the theory of functional equations and inequalities, we prove that a mid-point convex function $f:G\to\IR$ defined on an open convex subset $G$ of a metric linear space $X$ is continuous if it is upper bounded on a subset $B$ which is not null-finite and whose closure is contained in $G$. This gives an alternative short proof of a known generalization of Bernstein-Doetsch theorem (saying that a mid-point convex function $f:G\to\IR$ defined on an open convex subset $G$ of a metric linear space $X$ is continuous if it is upper bounded on a non-empty open subset $B$ of $G$). Since Borel Haar-finite sets are Haar-meager and Haar-null, we conclude that a mid-point convex function $f:G\to\IR$ defined on an open convex subset $G$ of a complete linear metric space $X$ is continuous if it is upper bounded on a Borel subset $B\subset G$ which is not Haar-null or not Haar-meager in $X$. The last result resolves an old problem in the theory of functional equations and inequalities posed by Baron and Ger in 1983.
\end{abstract}
\maketitle

\section*{Introduction}

In 1920 Steinhaus \cite{Steinhaus} proved that for any measurable sets $A,B$ of positive Haar measure in a locally compact Polish group $X$ the sum $A+B:=\{a+b:a\in A,\;b\in B\}$ has non-empty interior in $X$ and the difference $B-B:=\{a-b:a,b\in B\}$ is a neighborhood of zero in $X$.

In \cite{Ch} Christensen extended the ``difference'' part of the Steinhaus results to all Polish Abelian groups proving that for a Borel subset $B$ of a Polish Abelian group $X$ the difference $B-B$ is a neighborhood of zero if $B$ is not Haar-null. Christensen defined a Borel subset $B\subset X$ to be {\em Haar-null} if there exists a Borel $\sigma$-additive probability measure $\mu$ on $X$ such that $\mu(B+x)=0$ for all $x\in X$.

A topological version of Steinhaus theorem was obtained by Pettis \cite{Pet} and Piccard \cite{Pic} who proved that for any non-meager Borel sets $A,B$ in a Polish group $X$ the sum $A+B$ has non-empty interior and the difference $B-B$ is a neighborhood of zero.

In 2013 Darji \cite{Darji} introduced a subideal of the $\sigma$-ideal of meager sets in a Polish group, which is a topological analog to the $\sigma$-ideal of Haar-null sets. Darji defined a Borel subset $B$ of a Polish group $X$ to be {\em Haar-meager} if there exists a continuous map $f:K\to X$ from a non-empty compact metric space $K$ such that for every $x\in X$ the preimage $f^{-1}(B+x)$ is meager in $K$.
By \cite{Jab}, for every Borel subset $B\subset X$ which is not Haar-meager in a Polish Abelian group $X$ the difference $B-B$ is a neighborhood of zero.

It should be mentioned that in contrast to the ``difference'' part of the Steinhaus and Piccard--Pettis theorems, the ``additive'' part can not be generalized to non-locally compact groups: by \cite{MZ}, \cite{Jab} each non-locally compact  Polish Abelian group $X$ contains two closed  sets $A,B$ whose sum $A+B$ has empty interior but $A,B$ are neither Haar-null nor Haar-meager in $X$. In the Polish group $X=\IR^\w$, for such sets $A,B$ we can take the positive cone $\IR_+^\w$.

Steinhaus-type theorems has significant applications in the theory of functional equations and inequalities. For example, the ``additive'' part of Steinhaus Theorem can be applied to prove that a mid-point convex function $f:\IR^n\to \IR$ is continuous if it is upper bounded on some measurable set $B\subset\IR^n$ of positive Lebesgue measure (see e.g. \cite[p.210]{Kuczma}). We recall that a function $f:G\to\IR$ defined on a convex subset of a linear space is {\em mid-point convex} if $f\left(\frac{x+y}{2}\right)\leq \frac{f(x)+f(y)}{2}$ for any $x,y\in G$. Unfortunately, due to the example of Matou\u{s}kov\'{a} and Zelen\'{y} \cite{MZ} we know that the ``additive'' part of the Steinhaus theorem does not extend to non-locally compact Polish Abelian groups. This leads to the following natural problem whose ``Haar-null'' version was posed by Baron and Ger in \cite[P239]{BG}.
\smallskip

\noindent{\bf Problem} (Baron, Ger, 1983). {\em Is the continuity of an additive or mid-point convex function $f:X\to\IR$ on a Banach space $X$ equivalent to the upper boundedness of the function on some Borel subset $B\subset X$ which is not Haar-null or not Haar-meager?}
\smallskip

In this paper we give the affirmative answer to the Baron--Ger  Problem applying a new concept of a null-finite set, which will be introduced in Section~\ref{s:Haar-finite}. In Section~\ref{s:Steinhaus} we show that non-null-finite sets $A$ in  metric groups $X$ possess a Steinhaus-like property: if a subset $A$ of a metric group $X$ is not null-finite, then $A-\bar A$ is a neighborhood of $\theta$ and for some finite set $F\subset X$ the set $F+(A-A)$ is a neighborhood of zero in $X$.
In Sections~\ref{s:Haar-meager} and \ref{s:Haar-null} we shall prove that Borel null-finite sets in Polish Abelian groups are Haar-meager and Haar-null. On the other hand, in Example~\ref{ex1} we show that the product $\prod_{n\in\IN}C_n$ of cyclic groups contains a closed Haar-null set which is not null-finite and in Example~\ref{ex2} we show that the product $\prod_{n\in\IN}C_{2^n}$ contains a null-finite $G_\delta$-set which cannot be covered by countably many closed Haar-null sets. In Section~\ref{s:union} we prove that each non-discrete metric Abelian group is the union of two null-finite sets and also observe that sparse sets (in the sense of Lutsenko--Protasov \cite{LP}) are null-finite.   In Section~\ref{s:functional} we apply null-finite sets to prove that an additive functional $f:X\to \IR$ on a Polish Abelian group is continuous if it is upper bounded on some subset $B\subset X$ which is not null-finite in $X$. In Section~\ref{s:Banach} we generalize this result to additive functions with values in Banach and locally convex spaces. Finally, in Section~\ref{s:Jensen}, we prove a generalization of Bernstein--Doetsch theorem and next apply it in the proof of a continuity criterion for mid-point convex functions on complete metric linear spaces, thus answering the Baron--Ger Problem. In Section~\ref{s:prob} we pose some open problems related to null-finite sets.

\section{Preliminaries}

{\em All groups considered in this paper are Abelian.} The neutral element of a group will be denoted by $\theta$. For a group $G$ by $G^*$ we denote the set of non-zero elements in $G$. By $C_n=\{z\in\IC:z^n=1\}$ we denote the cyclic group of order $n$.

 By a ({\em complete}) {\em metric group} we understand an Abelian group $X$ endowed with a (complete) invariant metric $\|\cdot-\cdot\|$. The invariant metric $\|\cdot-\cdot\|$ determines (and can be recovered from) the {\em prenorm} $\|\cdot\|$ defined by $\|x\|:=\|x-\theta\|$. So, a metric group can be equivalently defined as a group endowed with a prenorm.

Formally, a {\em prenorm} on a group $X$ is a function $\|\cdot\|:X\to\IR_+:=[0,\infty)$ satisfying three axioms:
\begin{itemize}
\item $\|x\|=0$ iff $x=\theta$;
\item $\|-x\|=\|x\|$;
\item $\|x+y\|\le \|x\|+\|y\|$
\end{itemize}
for any $x,y\in X$; see \cite[\S3.3]{AT}.

Each metric group is a topological group with respect to the topology, generated by the metric.

All linear spaces considered in this paper are over the field $\IR$ of real numbers. By a {\em metric linear space} we understand a linear space endowed with an invariant metric.

A non-empty family $\I$ of subsets of a set $X$ is called {\em ideal of sets on $X$} if $\I$ satisfies the following conditions:
\begin{itemize}
\item $X\notin\I$;
\item for any subsets $J\subset I\subset X$ the inclusion $I\in\I$ implies $J\in\I$;
\item for any sets $A,B\in\I$ we have $A\cup B\in\I$.
\end{itemize}
An ideal $\I$ on $X$ is called a {\em $\sigma$-ideal} if for any countable subfamily $\mathcal C\subset\I$ the union $\bigcup\mathcal C$ belongs to $X$.

An ideal $\I$ on a group $X$ is called {\em invariant} if for any $I\in\I$ and $x\in X$ the shift $x+I$ of $I$ belongs to the ideal $\I$.

For example, the family $[X]^{<\w}$ of finite subsets of an infinite group $X$ is an invariant ideal on $X$. 

A topological space is {\em Polish} if it is homeomorphic to a separable complete metric space. A topological space is {\em analytic} if it is a continuous image of a Polish space.

\section{Introducing null-finite sets in topological groups}\label{s:Haar-finite}

In this section we introduce the principal new notion of this paper.

A sequence $(x_n)_{n\in\w}$ in a topological group $X$ is called a {\em null-sequence} if it converges to the neutral element $\theta$ of $X$.

\begin{definition}
A set $A$ of a topological group $X$  is called {\em null-finite} if there exists a null-sequence $(x_n)_{n\in\w}$ in $X$ such that for every $x\in X$ the set $\{n\in\w:x+x_n\in A\}$ is finite.
\end{definition}

Null-finite sets in metrizable topological groups can be defined as follows.

\begin{proposition}\label{p:Hf} For a non-empty subset $A$ of a metric group $X$ the following conditions are equivalent:
\begin{enumerate}
\item $A$ is null-finite;
\item there exists an infinite compact set $K\subset X$ such that for every $x\in X$ the intersection $K\cap(x+A)$ is finite;
\item there exists a continuous map $f:K\to X$ from an infinite compact space $K$ such that for every $x\in X$ the preimage $f^{-1}(A+x)$ is finite.
\end{enumerate}
\end{proposition}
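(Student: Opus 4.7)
The plan is to prove the cyclic chain of implications $(1)\Rightarrow(2)\Rightarrow(3)\Rightarrow(1)$. The middle implication is essentially free: given the compact set $K$ from (2), take the inclusion map $K\hookrightarrow X$ as $f$ and observe that $f^{-1}(A+x)=K\cap(A+x)$ is finite for every $x\in X$ by hypothesis.

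For $(1)\Rightarrow(2)$, I would start with the witnessing null-sequence $(x_n)_{n\in\w}$ and set $K:=\{x_n:n\in\w\}\cup\{\theta\}$. As a convergent sequence together with its limit, $K$ is compact. The key preliminary observation is that, because $A$ is non-empty, no value can occur infinitely often among the $x_n$: if some $y$ appeared infinitely often then necessarily $y=\theta$ (as $(x_n)\to\theta$), and then for any $a\in A$ the choice $x:=a$ would force $x+x_n=a\in A$ infinitely often, contradicting null-finiteness. Hence $K$ is infinite, and the finiteness of $K\cap(x+A)$ for arbitrary $x\in X$ follows by applying the null-finiteness hypothesis to $-x$.

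For $(3)\Rightarrow(1)$, I would work with the compact subset $f(K)\subset X$, which is automatically metrizable as a subspace of the metric group $X$. First I check that $f(K)$ is infinite: otherwise some fiber $f^{-1}(y_0)$ is infinite, and for any $a\in A$ the choice $x:=y_0-a$ makes $f^{-1}(A+x)\supseteq f^{-1}(y_0)$ infinite, contradicting (3). Being an infinite compact metric space, $f(K)$ has an accumulation point $y$; I pick distinct points $y_n\in f(K)$ with $y_n\to y$ and set $x_n:=y_n-y$, a null-sequence in $X$. Choosing pre-images $k_n\in f^{-1}(y_n)$, which are automatically distinct since the $y_n$ are, the equivalence $x+x_n\in A\Leftrightarrow k_n\in f^{-1}\bigl(A+(y-x)\bigr)$ exhibits $\{n:x+x_n\in A\}$ as a subset of a finite set, as required.

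The step I expect to require the most care is the verification that the constructions give genuinely infinite objects: the sequence $(x_n)$ in (1)$\Rightarrow$(2) must take infinitely many distinct values, and the image $f(K)$ in (3)$\Rightarrow$(1) must be infinite. In both cases the argument hinges on the non-emptiness of $A$, which is precisely the hypothesis included in the statement; everything else is bookkeeping with the group operation.
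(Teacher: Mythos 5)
Your proof is correct and follows essentially the same route as the paper: the paper also reduces everything to $(3)\Rightarrow(1)$, notes that all fibers of $f$ are finite because $A\neq\emptyset$ (so $f(K)$ is infinite), and extracts a convergent sequence of distinct points $y_n\to y_\infty$ in $f(K)$ to form the null-sequence $x_n=y_n-y_\infty$. The only difference is one of detail: you spell out the implications the paper calls obvious, including the worthwhile observation that non-emptiness of $A$ is exactly what makes the set $\{x_n\}_{n\in\w}$ infinite in $(1)\Rightarrow(2)$.
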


\begin{proof} The implications $(1)\Rightarrow (2)\Rightarrow (3)$ are obvious, so it is enough to prove $(3)\Rightarrow (1)$. Assume that $f:K\to X$ is a continuous map from an infinite compact space $K$ such that for every $x\in X$ the preimage $f^{-1}(A+x)$ is finite. Fix any point $a\in A$. It follows that for every $x\in X$ the fiber $f^{-1}(a+x)\subset f^{-1}(A+x)$ is finite and hence the image $f(K)$ is infinite. So, we can choose a sequence $(y_n)_{n\in\w}$ of pairwise distinct points in $f(K)$. Because of the compactness and metrizability of $f(K)$, we can additionally assume that the sequence $(y_n)_{n\in\w}$ converges to some point $y_\infty\in f(K)$. Then the sequence $(x_n)_{n\in\w}$ of points $x_n:=y_n-y_\infty$, $n\in\w$, is a null-sequence witnessing that the set $A$ is null-finite.
\end{proof}

Null-finite sets are ``small'' in the following sense.

\begin{proposition}\label{p:Fempty} For any null-finite set $A$ in a metric group $X$ and any finite subset $F\subset X$ the set $F+A$ has empty interior in $X$.
\end{proposition}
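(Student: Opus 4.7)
The plan is to argue by contradiction, using the pigeonhole principle to reduce the ``$F+A$'' case to the definition of null-finiteness applied to a single translate of $A$.

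Assume, toward a contradiction, that $F+A$ has non-empty interior, so there is a non-empty open set $U\subset F+A$; fix any point $u\in U$. Since $A$ is null-finite, there is a null-sequence $(x_n)_{n\in\w}$ in $X$ such that for every $x\in X$ the set $\{n\in\w:x+x_n\in A\}$ is finite. Because translation by $u$ is continuous and $x_n\to\theta$, we have $u+x_n\to u\in U$, so $u+x_n\in U\subset F+A$ for all sufficiently large $n$. For each such $n$ choose $f_n\in F$ and $a_n\in A$ with $u+x_n=f_n+a_n$.

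Since $F$ is finite, the pigeonhole principle yields a single element $f\in F$ with $f_n=f$ for infinitely many $n\in\w$. For each such $n$ we have $(u-f)+x_n=a_n\in A$, so the set $\{n\in\w:(u-f)+x_n\in A\}$ is infinite. This directly contradicts the null-finiteness of $A$ applied to the element $x:=u-f\in X$, finishing the argument.

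The proof is essentially one application of pigeonhole, so there is no substantive obstacle; the only point worth noting is that the finiteness of $F$ is exactly what allows us to pass from the apparent ``covering by finitely many translates'' to a single translate of $A$ containing infinitely many terms $(u-f)+x_n$, which is the form ruled out by Definition~2.1.
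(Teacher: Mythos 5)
Your proof is correct and follows essentially the same route as the paper's: assume $F+A$ has interior, note that $u+x_n$ eventually lies in the open set $U\subset F+A$ by convergence, and apply the pigeonhole principle over the finite set $F$ to contradict null-finiteness for the single translate $u-f$. No gaps.
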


\begin{proof} Since $A$ is null-finite, there exists a null-sequence $(x_n)_{n\in\w}$ in $X$ such that
for every $x\in X$ the set $\{n\in\w:x+x_n\in A\}$ is finite.

To derive a contradiction, assume that for some finite set $F$ the set $F+A$ has non-empty interior $U$ in $X$. Then for any point $u\in U$ the set $\{n\in\w:u+x_n\in U\}\subset \{n\in\w:u+x_n\in F+A\}$ is infinite. By the Pigeonhole Principle, for some $y\in F$ the set $\{n\in\w:u+x_n\in y+A\}$ is infinite and so is the set $\{n\in\w:-y+u+x_n\in A\}$. But this contradicts the choice of the sequence $(x_n)_{n\in\w}$.
\end{proof}

Now we present some examples of closed sets which are (or are not) null-finite. We recall that for a group $G$ by $G^*$ we denote the set $G\setminus\{\theta\}$ of non-zero elements of $G$.

\begin{example}\label{ex} Let $(G_n)_{n\in\w}$ be a sequence of finite groups. The set $A:=\prod_{n\in\w}G_n^*$ is null-finite in the compact metrizable topological group $G=\prod_{n\in\w}G_n$ if and only if\/  $\lim_{n\to\infty}|G_n|\ne\infty$.
\end{example}

\begin{proof} First, we assume that $\lim_{n\to\infty}|G_n|=\infty$ and show that the set $A$ is not null-finite in $G$. Given a null-sequence $(x_n)_{n\in\w}$ in the compact topological group $G$, we should find an element $a\in G$ such that the set $\{n\in\w:a+x_n\in A\}$ is infinite.

It will be convenient to think of elements of the group $G$ as functions $x:\w\to \bigoplus_{n\in\w}G_n$ such that $x(k)\in G_k$ for all $k\in\w$.

Taking into account that $\lim_{k\to\infty}|G_k|=\infty$ and $\lim_{n\to\infty}x_n=\theta\in G$, we can inductively construct an increasing number sequence $(n_k)_{k\in\w}$ such that $n_0=0$ and for every $k\in\IN$ and $m\ge n_{k}$ the following two conditions are satisfied:
\begin{itemize}
\item[(1)] $x_m(i)=\theta$ for all $i<n_{k-1}$;
\item[(2)] $|G_m|\ge k+3$.
\end{itemize}

We claim that for every $i\ge n_1$ the set $\{x_{n_k}(i):k\in\w\}$ has cardinality $<|G_i|$. Indeed, given any $i\ge n_1$, we can find a unique $j\ge 1$ such that $n_j\le i<n_{j+1}$ and conclude that $x_{n_k}(i)=\theta$ for all $k\ge j+1$ (by the condition (1)). Then the set $\{x_{n_k}(i):k\in\w\}=\{\theta\}\cup\{x_{n_k}(i):k\le j\}$ has cardinality $\le 1+(j+1)<j+3\le |G_i|$. The last inequality follows from $i\ge n_k$ and condition (2). Therefore, the set $\{x_{n_k}(i):k\in\w\}$ has cardinality $<|G_i|$ and we can choose a point $a_i\in G_i\setminus\{-x_{n_k}(i):k\in\w\}$ and conclude that $a_i+x_{n_k}(i)\in G_i^*$ for all $k\in\w$. Then the element $a=(a_i)_{i\in\w}\in G$ has the required property as the set $\{n\in\w:a+x_n\in A\}\supset\{n_k\}_{k\in\w}$ is infinite.
\smallskip

Next, assuming that $\lim_{k\to\infty}|G_k|\ne\infty$, we shall prove that the set $A$ is Haar-finite in $G$. Since $\lim_{k\to\infty}|G_k|\ne\infty$, for some $l\in\w$ the set $\Lambda=\{k\in\w:|G_k|=l\}$ is infinite.

Since $\lim_{n\to\infty}(1-\frac1l)^n=0$, there exists $n\in\IN$ such that $(1-\frac1l)^n<\frac1l$ and hence $l(l-1)^n<l^n$. This implies that for every $k\in\Lambda$ the set $G_k{\cdot}(G_k^*)^n:=\{(x+a_1,\cdots x+a_n):x\in G_k,\;a_1,\dots,a_n\in G_k^*\}$ has cardinality $|G_k{\cdot}(G_k^*)^n|\le l(l-1)^n<l^n=|G_k^n|$. Consequently, the (compact) set $G{\cdot}A^n=\{(x+a_1,\dots,x+a_n):x\in G,\;a_1,\dots,a_n\in A\}$ is nowhere dense in the compact topological group $G^n$. Now Theorem~\ref{t:MK} ensures that $A$ is null-finite in $G$.
\end{proof}

\begin{theorem}\label{t:MK} A non-empty subset $A$ of a  Polish group $X$ is null-finite in $X$ if for some $n\in\IN$ the set $X{\cdot} A^n:=\{(x+a_1,\dots,x+a_n):x\in X,\;a_1,\dots,a_n\in A\}$ is meager in $X^n$.
\end{theorem}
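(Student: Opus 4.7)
The plan is to combine two elementary observations with the classical theorem of Mycielski on meager sets in finite powers of Polish spaces. First, the set $M:=X\cdot A^n$ is invariant under the diagonal action of $X$ on $X^n$: if $(y_1,\dots,y_n)\in M$, then $(y_1+z,\dots,y_n+z)\in M$ for every $z\in X$, directly from the definition. Second, an injective null-sequence $(x_k)_{k\in\w}$ witnesses that $A$ is null-finite as soon as every tuple $(x_{k_1},\dots,x_{k_n})$ of pairwise distinct terms lies outside $M$; indeed, if some translate $A+x$ contained infinitely many of the $x_k$, one could extract $n$ distinct indices $k_1<\cdots<k_n$ with $\{x_{k_1},\dots,x_{k_n}\}\subset A+x$, which forces $(x_{k_1}-x,\dots,x_{k_n}-x)\in A^n$ and hence $(x_{k_1},\dots,x_{k_n})\in M$.

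With these reductions, the task becomes to produce an injective null-sequence in $X$ whose pairwise-distinct $n$-tuples all avoid $M$. I would first dispose of the trivial case: a discrete Polish group is countable, so $X^n$ contains no non-empty meager subset, whereas $M\ne\emptyset$ as soon as $A\ne\emptyset$; thus the hypothesis would be vacuous. So I may assume $X$ is non-discrete, and by the homogeneity of topological groups, perfect. Then Mycielski's theorem furnishes a Cantor set $C\subset X$ such that every tuple of $n$ pairwise distinct points of $C$ lies outside $M$. Picking any accumulation point $y_\infty\in C$ and an injective sequence $(y_k)_{k\in\w}$ in $C\setminus\{y_\infty\}$ converging to $y_\infty$, I set $x_k:=y_k-y_\infty$; the diagonal-translation invariance of $M$ transports the Mycielski property from $(y_k)$ to the null-sequence $(x_k)$, and the second observation above completes the argument.

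The step I expect to be the real obstacle is the very invocation of Mycielski's theorem. A naive inductive construction, in which one chooses $x_N$ close to $\theta$ avoiding the fibres $\{y\in X:(x_{k_1},\dots,x_{k_{n-1}},y)\in M\}$ for all $(n{-}1)$-tuples of previously chosen points, is not straightforward: meagerness of $M$ in $X^n$ only guarantees (by Kuratowski--Ulam) that these fibres are meager for a comeager set of base points, not for arbitrarily chosen ones already fixed by the induction. Mycielski's Cantor-scheme construction is precisely designed to cope with this obstruction, and the diagonal invariance of $M$ is what lets us convert the resulting Cantor set, located at an arbitrary place in $X$, into a genuine null-sequence.
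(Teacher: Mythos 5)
Your proof is correct and follows essentially the same route as the paper: both arguments hinge on the Kuratowski--Mycielski theorem applied to the meager set $X\cdot A^n$, followed by extracting a convergent injective sequence from the resulting compact set and translating it into a null-sequence. The only cosmetic differences are that the paper invokes the hyperspace form of the theorem (comeagerness of $W_*(A)$ in $\mathcal K(X)$) and delegates the translation step to Proposition~\ref{p:Hf}, while you work with the Cantor set directly and, commendably, make explicit the diagonal invariance of $X\cdot A^n$ and the (vacuous) discrete case that the paper leaves implicit.
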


\begin{proof} Let $\K(X)$ be the space of non-empty compact subsets of $X$, endowed with the Vietoris topology. For a compact subset $K\subset X$ let $K^n_*=\{(x_1,\dots,x_n)\in K^n:|\{x_1,\dots,x_n\}|=n\}$ the set of $n$-tuples consisting of pairwise distinct points of $K$.

By Mycielski-Kuratowski Theorem \cite[19.1]{Ke}, the set $W_*(A):=\{K\in\K(X):K^n_*\cap (X{\cdot}A^n)=\emptyset\}$ is comeager in $\K(X)$ and hence contains some infinite compact set $K$. We claim that for every $x\in X$ the set $K\cap (x+A)$ has cardinality $<n$. Assuming the opposite, we could find $n$ pairwise distinct points $a_1,\dots,a_n\in A$ such that $x+a_i\in K$ for all $i\le n$. Then $(x+a_1,\dots,x+a_n)\in K^n_*\cap X\cdot A^n$, which contradicts the inclusion $K\in W_*(A)$. By Proposition~\ref{p:Hf}, the set $A$ is null-finite.
\end{proof}

\section{A Steinhaus-like properties of sets which are not null-finite}\label{s:Steinhaus}

In \cite{Steinhaus} Steinhaus proved that for a subset $A$ of positive Lebesgue measure in the real line, the set $A-A$ is a neighborhood of zero.
In this section we establish three Steinhaus-like properties of sets which are not null-finite.

\begin{theorem}\label{t:Steinhaus} If a subset $A$ of a metric group $X$ is not null-finite, then
\begin{enumerate}
\item the set $A-\bar A$ is a neighborhood of $\theta$ in $X$;
\item each neighborhood $U\subset X$ of $\theta$ contains a finite subset $F\subset U$ such that $F+(A-A)$ is a neighborhood of $\theta$.
\item If $X$ is Polish (and $A$ is analytic), then $A-A$ is not meager in $X$ (and $(A-A)-(A-A)$ is a neighborhood of $\theta$ in $X$).
\end{enumerate}
\end{theorem}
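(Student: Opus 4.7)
The plan is to establish each assertion by contraposition: from the failure of the conclusion I will manufacture a null-sequence witnessing that $A$ is null-finite, contradicting the hypothesis.

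For (1), I would suppose $A - \bar A$ is not a neighborhood of $\theta$ and pick $x_n \in V_n \setminus (A - \bar A)$ along a decreasing neighborhood base $(V_n)$ at $\theta$, giving $x_n \to \theta$. Then if $\{n : x + x_n \in A\}$ were infinite for some $x$, along a subsequence the points $a_k := x + x_{n_k} \in A$ would converge to $x$, forcing $x \in \bar A$ and $x_{n_k} = a_k - x \in A - \bar A$, contradicting the choice of $(x_n)$; so $A$ would be null-finite, against hypothesis.

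For (2), I would assume that some neighborhood $U$ of $\theta$ contains no finite $F \subset U$ with $F + (A - A)$ a neighborhood of $\theta$, fix a decreasing base $(V_n)$ at $\theta$ with $V_0 \subset U$, and inductively choose $z_n \in V_n \setminus (F_n + (A - A))$, where $F_n := \{z_0, \dots, z_{n-1}\} \subset U$. The assumption makes each such choice possible since $F_n + (A-A)$ is not a neighborhood of $\theta$, so $V_n \not\subset F_n + (A - A)$. Any pair of indices $i < j$ with $x + z_i, x + z_j \in A$ would give $z_j - z_i \in A - A$, hence $z_j \in z_i + (A - A) \subset F_j + (A - A)$, contradicting the construction. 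Thus $\{n : x + z_n \in A\}$ contains at most one element for every $x$, and $(z_n)$ witnesses null-finiteness of $A$.

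For (3), I would exploit the continuous surjection $\phi : X \times X \to X$ defined by $\phi(y_1, y_2) := y_1 - y_2$, together with the direct identity $X \cdot A^2 = \phi^{-1}(A - A)$ (easily verified: $(y_1, y_2) = (x + a_1, x + a_2)$ for some $x$ iff $y_1 - y_2 = a_1 - a_2 \in A - A$). If $A - A$ were meager, covering it by closed nowhere dense sets $F_n$, each $\phi^{-1}(F_n)$ would be closed in $X^2$ with empty interior, since any product neighborhood $U_1 \times U_2 \subset \phi^{-1}(F_n)$ would force $U_1 - U_2 \subset F_n$, an open non-empty set contradicting nowhere density. Hence $X \cdot A^2$ would be meager in $X^2$, and Theorem~\ref{t:MK} with $n = 2$ would give $A$ null-finite. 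When $A$ is analytic, $A - A$ is analytic too (as a continuous image of $A \times A$), hence has the Baire property, and the classical Pettis theorem applied to the non-meager set $A - A$ delivers $(A - A) - (A - A)$ as a neighborhood of $\theta$.

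The hardest step is (3): one must first translate meagerness of $A - A$ into meagerness of $X \cdot A^2$ in order to activate Theorem~\ref{t:MK}, and then layer on Pettis' theorem to upgrade non-meagerness of the analytic set $A - A$ to a full neighborhood of $\theta$. Parts (1) and (2), by contrast, are elementary diagonal constructions along a neighborhood base.
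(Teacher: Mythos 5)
Your proposal is correct and follows essentially the same route as the paper's proof: parts (1) and (2) are the same diagonal constructions along a neighborhood base, and part (3) uses the same identity $X\cdot A^2=\delta^{-1}(A-A)$ together with Theorem~\ref{t:MK} for $n=2$ and the Pettis--Piccard theorem. The only cosmetic difference is that in (3) you verify by hand that preimages of nowhere dense sets under the difference map are nowhere dense, where the paper simply invokes the openness of that map.
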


\begin{proof} 1. Assuming that $A-\bar A$ is not a neighborhood of $\theta$, we could find a null-sequence $(x_n)_{n\in\w}$ contained in $X\setminus(A-\bar A)$. Since $A$ is not null-finite, there exists $a\in X$ such that the set $\Omega=\{n\in\w:a+x_n\in A\}$ is infinite. Then $a\in\overline{\{a+x_n\}}_{n\in\Omega}\subset\bar A$ and hence $x_n=(a+x_n)-a\in A-\bar A$ for all $n\in\Omega$, which contradicts the choice of the sequence $(x_n)_{n\in\w}$.
\smallskip

2. Fix a decreasing neighborhood base $(U_n)_{n\in\w}$ at zero in $X$ such that $U_0\subset U$. For the proof by contradiction,  suppose that for any finite set $F\subset U$ the set $F+(A-A)$ is not a neighborhood of zero. Then  we can inductively construct a null-sequence $(x_n)_{n\in\w}$ such that $x_n\in U_n\setminus \bigcup_{0\le i<n}(x_i+A-A)$ for all $n\in\w$. Observe that for each $z\in X$ the set $\{n\in\w:z+x_n\in A\}$ contains at most one point. Indeed, in the opposite case we could find two numbers $k<n$ with $z+x_k,z+x_n\in A$ and conclude that $z\in -x_k+A$ and hence $x_n\in -z+A\subset x_k-A+A$, which contradicts the choice of the number $x_n$. The sequence $(x_n)_{n\in\w}$ witnesses that the set $A$ is null-finite in $X$.
\smallskip

3. Finally assume that the group $X$ is Polish. To derive a contradiction, assume that $A-A$ is meager. Since the map $\delta:X\times X\to X$, $\delta:(x,y)\mapsto x-y$, is open, the preimage $\delta^{-1}(A-A)$ is meager in $X\times X$. Observe that $\delta^{-1}(A-A)=X{\cdot}A^2$ where $X{\cdot}A^2=\{(x+a,x+b):x\in X,\;a,b\in A\}$. By Theorem~\ref{t:MK}, the set $A$ is null-finite, which is a desired contradiction. This contradiction shows that the set $A-A$ is not meager in $X$. If the space $A$ is analytic, then so is the subset $A-A$ of $X$. Being analytic, the set $A-A$ has the Baire Property in $X$; see \cite[29.14]{Ke}. So, we can apply the Pettis-Piccard Theorem \cite[9.9]{Ke} and conclude that $(A-A)+(A-A)$ is a neighborhood of $\theta$ in $X$.
\end{proof}

The last statement of Theorem~\ref{t:Steinhaus} does not hold without any assumptions on $A$ and $X$.

\begin{example} {\rm Let $f:C_2^\w\to C_2$ be a discontinuous homomorphism on the compact Polish group $X=C_2^\w$ (thought as the vector space $C_2^\w$ over the two-element field $\IZ/2\IZ$). Then the subgroup $A=f^{-1}(\theta)$ is not null-finite in $X$ (by Proposition~\ref{p:Fempty}), but $A-A=A=(A-A)-(A-A)$ is not a neighborhood of $\theta$ in $X$.}
\end{example}

\section{A combinatorial characterization of null-finite sets in compact metric groups}

In compact metric groups null-finite sets admit a purely combinatorial description.

\begin{proposition}\label{p:char-comp} A non-empty subset $A$ of a compact metric group $X$ is null-finite if and only if there exists an infinite set $I\subset X$ such that for any infinite subset $J\subset I$ the intersection $\bigcap_{x\in J}(A-x)$ is empty.
\end{proposition}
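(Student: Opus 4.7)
The plan is to translate the combinatorial condition on $I$ into an equivalent pointwise statement that is manifestly close to the definition of null-finite, and then verify each implication.

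First, I would observe that a point $y\in X$ belongs to $\bigcap_{x\in J}(A-x)$ precisely when $J\subset A-y$. Consequently, the condition ``for every infinite $J\subset I$ one has $\bigcap_{x\in J}(A-x)=\emptyset$'' is equivalent to the simpler statement ``for every $y\in X$, the set $(A-y)\cap I=\{x\in I:y+x\in A\}$ is finite.'' After this reformulation, the equivalence with null-finiteness becomes almost immediate.

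For the implication $(\Leftarrow)$, suppose such an infinite set $I$ is given. Using the compactness of $X$ and the infiniteness of $I$, I extract a sequence $(y_n)_{n\in\w}$ of pairwise distinct points of $I$ converging to some $y_\infty\in X$, and set $x_n:=y_n-y_\infty$. Then $(x_n)_{n\in\w}$ is a null-sequence in $X$, and for any $y\in X$
\[
\{n\in\w:y+x_n\in A\}=\{n\in\w:y_n\in A-(y-y_\infty)\},
\]
which is finite, because $(A-(y-y_\infty))\cap I$ is finite and the $y_n$ are pairwise distinct elements of $I$. Hence $A$ is null-finite.

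For the implication $(\Rightarrow)$, let $(x_n)_{n\in\w}$ be a null-sequence witnessing that $A$ is null-finite. The natural candidate is $I:=\{x_n:n\in\w\}$, so the only point to check is that this set is infinite. Since $A$ is non-empty, fixing any $a\in A$ shows that $\{n\in\w:x_n=\theta\}$ must be finite (otherwise $a+x_n=a\in A$ would fail the null-finite condition for $y=a$). Moreover, since $x_n\to\theta$, every fixed nonzero $v\in X$ occurs in the sequence only finitely often. Passing to a subsequence, one may therefore assume the $x_n$ are pairwise distinct, so that $I$ is indeed infinite. Applying the null-finite property to each $y\in X$ yields that $(A-y)\cap I$ is finite, which by the reformulation above is exactly the required property of $I$. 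The only (minor) obstacle is this bookkeeping step in $(\Rightarrow)$: a witnessing null-sequence need not consist of distinct terms, so one must first verify that the underlying set of terms is infinite before the combinatorial condition can be extracted.
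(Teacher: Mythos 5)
Your proof is correct and follows essentially the same route as the paper: for the ``if'' direction you use compactness to extract a convergent sequence of pairwise distinct points of $I$ and shift by the limit, and for the ``only if'' direction you take $I$ to be the set of terms of a witnessing null-sequence, using the non-emptiness of $A$ (and convergence to $\theta$) to see that each value occurs only finitely often, so $I$ is infinite. Your upfront reformulation of the combinatorial condition as ``$(A-y)\cap I$ is finite for every $y$'' is only a cosmetic repackaging of the argument the paper carries out inside each implication.
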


\begin{proof} To prove the ``only if'' part, assume that $A\subset X$ is null-finite. So, there exists a null-sequence $(x_n)_{n\in\w}$ such that for every $x\in X$ the set $\{n\in\w:x+x_n\in A\}$ is finite. It follows that for every $x\in X$ the set $\{n\in\w:x_n=x\}$ is finite and hence the set $I:=\{x_n\}_{n\in\w}$ is infinite. We claim that this set has the required property. Indeed, assuming that for some infinite subset $J\subset I$ the intersection $\bigcap_{x\in J}(A-x)$ contains some point $a\in X$, we conclude that the set $\{n\in\w:a+x_n\in A\}$ contains the set $\{n\in\w:x_n\in J\}$ and hence is infinite, which contradicts the choice of the sequence $(x_n)_{n\in\w}$.
\smallskip

To prove the ``if'' part, assume that there exists an infinite set $I\subset X$ such that for every infinite set $J\subset I$ the intersection $\bigcap_{x\in J}(A-x)$ is empty. By the compactness of the metric group $X$, some sequence $(x_n)_{n\in\w}$ of pairwise distinct points of the infinite set $I$ converges to some point $x_\infty\in X$. Then the null-sequence $(z_n)_{n\in\w}$ consisting of the points $z_n=x_n-x_\infty$, $n\in\w$, witnesses that $A$ is null-finite. Assuming the opposite, we would find a point $a\in X$ such that the set $\{n\in\w:a+z_n\in A\}$ is infinite. Then the set $J:=\{x\in I:a-x_\infty+x\in A\}\subset \{x_n:n\in\w,\; a-x_\infty+x_n\in A\}$ is infinite, too, and the intersection $\bigcap_{x\in J}(A-x)$ contains the point $a-x_\infty$ and hence is not empty, which contradicts the choice of the set $I$.
\end{proof}

Following Lutsenko and Protasov \cite{LP} we define a subset $A$ of an infinite group $X$ to be {\em sparse} if for any infinite set $I\subset X$ there exists a finite set $F\subset I$ such that $\bigcap_{x\in F}(x+A)$ is empty. By \cite[Lemma~1.2]{LP} the family of sparse subsets of a group is an invariant ideal on $X$ (in contrast to the family of null-finite sets).

\begin{proposition} Each sparse subset $A$ of a non-discrete metric group $X$ is null-finite.
\end{proposition}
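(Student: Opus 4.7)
The plan is to build a null-sequence of pairwise distinct points directly from the non-discreteness hypothesis, and then dualise the definition of null-finiteness into the language of intersections of translates, which is precisely the object that sparseness controls.

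First, because $X$ is non-discrete the neutral element $\theta$ is not isolated, so every prenorm-ball around $\theta$ contains non-zero elements. By a straightforward induction I would choose a sequence $(x_n)_{n\in\w}$ with $\|x_n\|<1/(n+1)$ and $x_n\notin\{x_0,\dots,x_{n-1}\}$ for every $n\in\w$; this is a null-sequence whose terms are pairwise distinct.

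Second, I would verify that this sequence witnesses that $A$ is null-finite. Assume toward a contradiction that for some $x\in X$ the set $\Omega:=\{n\in\w:x+x_n\in A\}$ is infinite. Rewriting ``$x+x_n\in A$'' as ``$x\in -x_n+A$'' gives
$$x\in\bigcap_{n\in\Omega}(-x_n+A)=\bigcap_{y\in J}(y+A),\quad\text{where }J:=\{-x_n:n\in\Omega\}.$$
Since the $x_n$ are pairwise distinct, so are the elements $-x_n$, and therefore $J$ is an infinite subset of $X$.

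Third, I would apply the sparseness of $A$ to the infinite set $J$ to produce a finite $F\subset J$ with $\bigcap_{y\in F}(y+A)=\emptyset$. Monotonicity of intersection then forces
$$\bigcap_{y\in J}(y+A)\subset\bigcap_{y\in F}(y+A)=\emptyset,$$
contradicting the membership of $x$ derived in the previous paragraph. Hence for every $x\in X$ the set $\Omega$ is finite, proving that $A$ is null-finite.

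The main subtlety, rather than a genuine obstacle, is the bookkeeping needed to match the two definitions: null-finiteness is phrased in terms of shifts of a sequence, while sparseness is phrased in terms of intersections $\bigcap_{y\in F}(y+A)$. Rewriting the membership condition and recognising $\{-x_n:n\in\Omega\}$ as an infinite subset eligible for the sparseness hypothesis is the crucial step; and pairwise distinctness of the $x_n$ (which is exactly where the non-discreteness hypothesis is used) is what keeps this translated set infinite.
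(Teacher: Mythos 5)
Your proof is correct and follows essentially the same route as the paper: construct a null-sequence of pairwise distinct points from non-discreteness, translate the condition $x+x_n\in A$ into membership in $\bigcap_{n\in\Omega}(-x_n+A)$, and apply sparseness to the infinite set $\{-x_n:n\in\Omega\}$ to get a contradiction. No gaps.
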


\begin{proof} Being non-discrete, the metric group $X$ contains a null-sequence $(x_n)_{n\in\w}$ consisting of pairwise distinct points.
Assuming that the sparse set $A$ is not null-finite, we can find a point $a\in X$ such that the set $\Omega:=\{n\in\w:a+x_n\in A\}$ is infinite.
Since $A$ is sparse, for the infinite set $I:=\{-x_n:n\in\Omega\}$ there exists a finite set $F\subset I$ such that the intersection $\bigcap_{x\in F}(x+A)\supset\bigcap_{m\in \Omega}(-x_n+A)\ni a$ is empty, which is not possible as this intersection contains the point $a$.
\end{proof}

\section{Null-finite Borel sets are Haar-meager}\label{s:Haar-meager}

In this section we prove that each null-finite set with the universal Baire property in a complete metric group is Haar-meager.

A subset $A$ of a topological group $X$ is defined to have the {\em universal Baire Property} (briefly, $A$ is a {\em uBP-set}) if for any function $f:K\to X$ from a compact metrizable space $K$ the preimage $f^{-1}(A)$ has the Baire Property in $K$, which means that for some open set $U\subset K$ the symmetric difference $U\triangle f^{-1}(A)$ is meager in $K$. It is well-known that each Borel subset of a topological group has the universal Baire Property.

A uBP-set $A$ of a topological group $X$ is called {\em Haar-meager} if there exists a continuous map $f:K\to X$ from a compact metrizable space $K$ such that $f^{-1}(x+A)$ is meager in $K$ for all $x\in X$. By \cite{Darji}, for a complete metric group $X$ the family $\HM_X$ of subsets of Haar-meager uBP-sets in $X$ is an invariant $\sigma$-ideal on $X$. For more information on Haar-meager sets, see \cite{Darji}, \cite{DVVR}, \cite{DV}, \cite{Jab}.

\begin{theorem}\label{t:Hm} Each null-finite uBP-set in a compete metric group is Haar-meager.
\end{theorem}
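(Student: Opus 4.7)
My plan is to promote a witnessing null-sequence for $A$ to a continuous map from the Cantor space $2^\w$ and then use the universal Baire property together with a bit-flipping Baire-category argument. Given a null-sequence $(x_n)$ witnessing that $A$ is null-finite, I first pass to a subsequence (still a witness, since the finite set $\{n:z+x_n\in A\}$ only shrinks upon restriction of indices) satisfying $\|x_n\|\le 2^{-n}$. Completeness of $X$ and absolute summability then yield a continuous map $F:2^\w\to X$, $F(\sigma):=\sum_{n\in\w}\sigma(n)x_n$. It suffices to prove that $S_x:=F^{-1}(A+x)$ is meager in $2^\w$ for every $x\in X$, since this is exactly the statement that $F$ witnesses $A\in\HM_X$.

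Suppose, towards a contradiction, that some $S_x$ is non-meager. The universal Baire property of $A$ (stable under translation and continuous preimages) shows that $S_x$ has the Baire property in $2^\w$, so $S_x$ is comeager in some basic cylinder $[s]$ with $s\in 2^m$. For each $n\ge m$ the bit-flip $T_n(\sigma):=\sigma\oplus e_n$ is a self-homeomorphism of $[s]$, so $T_n(S_x)\cap[s]$ is comeager in $[s]$, and hence so is $S_x\cap T_n(S_x)$. Since $[s]$ is a Baire space, the countable intersection $\bigcap_{n\ge m}(S_x\cap T_n(S_x))$ remains comeager in $[s]$; intersecting with the comeager set $\{\sigma\in 2^\w:\sigma(n)=0\text{ for infinitely many }n\}$ still yields a non-empty subset of $[s]$. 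Picking $\sigma$ in it and setting $z:=F(\sigma)-x\in A$, the identity $F(\sigma\oplus e_n)=F(\sigma)+x_n$ (valid whenever $\sigma(n)=0$) gives $z+x_n\in A$ for infinitely many $n$, contradicting the null-finiteness witness $(x_n)$.

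The key obstacle, and the reason one must restrict to $\sigma$ with infinitely many $0$-bits, is that flipping the $n$-th coordinate shifts $F(\sigma)$ by $+x_n$ or $-x_n$ depending on $\sigma(n)$, while the hypothesis provides a null-finiteness witness only for $(x_n)$ and not necessarily for $(-x_n)$. Restricting to the $0$-bits selects infinitely many flips of the controlled $+x_n$ type, which is exactly what lets one invoke the null-finite condition. Everything else is routine Baire-category bookkeeping together with the elementary observation that subsequences of null-finiteness witnesses are again witnesses.
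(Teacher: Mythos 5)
Your proof is correct, and it follows the same overall strategy as the paper's: build a continuous summation map from a compact product space out of the witnessing null-sequence, use the universal Baire property to find a basic cylinder on which the preimage of a translate of $A$ is comeager, and then extract infinitely many indices $n$ with $z+x_n\in A$ for a single $z$. The combinatorial core differs, however. The paper works on $K=\prod_{n\in\w}K_n$ with $K_n=\{\theta\}\cup\{a_m\}_{m\ge n}$, so each coordinate space already contains an entire tail of the sequence; after localizing to a cylinder $\{b\}\times\prod_{m\ge j}K_m$ it slices over the single $j$-th coordinate (a Kuratowski--Ulam-type argument), and one comeager choice of the remaining coordinates yields $u+a_m\in A$ for all $m\ge j$ at once. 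You instead work on $2^\w$ and harvest one index per coordinate via the bit-flip involutions $T_n$, which forces the extra (correctly handled) refinements: intersecting countably many comeager sets $S_x\cap T_n(S_x)$ and restricting to $\sigma$ with infinitely many zero bits so that each flip moves $F(\sigma)$ by $+x_n$ rather than $-x_n$, matching the one-sided null-finiteness hypothesis. Your version is arguably more elementary and self-contained (only the Cantor set and standard homeomorphisms), while the paper's tail-product trick buys a shorter endgame because the infinitely many indices come from a single coordinate. A minor remark: the membership $z=F(\sigma)-x\in A$ that you record is true (since $\sigma\in S_x$) but irrelevant; only the conclusion $z+x_n\in A$ for infinitely many $n$ is needed.
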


\begin{proof} To derive a contradiction, suppose that a null-finite uBP-set $A$ in a complete metric group $(X,\|\cdot\|)$ is not Haar-meager. Since $A$ is null-finite, there exists a~null-sequence $(a_n)_{n\in\w}$ such that for every $x\in X$ the set $\{n\in\w:x+a_n\in A\}$ is finite.

Replacing $(a_n)_{n\in\w}$ by a suitable subsequence, we can assume that $\|a_n\|<\frac{1}{2^n}$ for all $n\in\w$. For every $n\in\w$ consider the compact set $K_n:=\{\theta\}\cup\{a_m\}_{m\ge n}\subset X$. The metric restriction on the sequence $(a_n)_{n\in\w}$ implies that the function $$\Sigma:\prod_{n\in\w}K_n\to X,\;\;\Sigma:(x_n)_{n\in\w}\mapsto\sum_{n\in\w}x_n,$$ is well-defined and continuous (the proof of this fact can be found in \cite{Jab}; see the proof of Theorem 2).

Since the set $A$ is not Haar-meager and has uBP, there exists a point $z\in X$ such that $B:=\Sigma^{-1}(z+A)$ is not meager and has the Baire Property in the compact metrizable space $K:=\prod_{n\in\w}K_n$. Consequently, there exits a non-empty open set $U\subset K$ such that $U\cap B$ is comeager in $U$. Replacing $U$ by a smaller subset, we can assume that $U$ is of basic form $\{b\}\times\prod_{m\ge j}K_m$ for some $j\in\w$ and some element $b\in\prod_{m<j}K_m$. It follows that for every $x\in K_j\setminus\{\theta\}$ the set $\{b\}\times\{x\}\times\prod_{m>j}K_m$ is closed and open in $K$ and hence the set $C_x:= \{y\in\prod_{m>j}K_m:\{b\}\times\{x\}\times\{y\}\subset U\cap B\}$ is comeager in $\prod_{m>j}K_m$. Then the intersection $\bigcap_{x\in K_n\setminus\{\theta\}}C_x$ is comeager and hence contains some point $c$. For this point we get the inclusion $\{b\}\times(K_j\setminus\{\theta\})\times\{c\}\subset B$. Let $v\in K$ be a unique point such that $\{v\}=\{b\}\times\{\theta\}\times \{c\}$. For any $m\ge j$ the point $a_m$ belongs to $K_j$ and the inclusion $\{b\}\times\{a_m\}\times\{c\}\subset B$ implies that $\Sigma(v)+a_m\in z+A$. Consequently, the element $u:=-z+\Sigma(v)$ has the property $u+a_m\in A$ for all $m\ge j$, which implies that the set $\{n\in\w:u+a_n\in A\}$ is infinite.
But this contradicts the choice of the sequence $(a_n)_{n\in\w}$.
\end{proof}

\section{Null-finite Borel sets are Haar-null}\label{s:Haar-null}

In this section we prove that each universally measurable null-finite set in a complete metric group is Haar-null.

A subset $A$ of a topological group $X$ is defined to be {\em universally measurable} if $A$ is measurable with respect to any $\sigma$-additive Radon Borel probability measure on $X$. It is clear that each Radon Borel subset of a topological group is universally measurable. We recall that a measure $\mu$ on a topological space $X$ is {\em Radon} if for any $\e>0$ there exists a compact subset $K\subset X$ such that $\mu(X\setminus K)<\e$.

A universally measurable subset $A$ of a topological group $X$ is called {\em Haar-null} if there exists a $\sigma$-additive Radon Borel probability measure $\mu$ on $X$ such that $\mu(x+A)=0$ for all $x\in X$.
By \cite{Ch}, for any complete metric group $X$ the family $\HN_X$ of subsets of universally measurable Haar-null subsets of $X$ is an invariant $\sigma$-ideal in $X$.
For more information on Haar-null sets, see \cite{Ch}, \cite{HSY}, \cite{HSY1} and also e.g. \cite{EVid}, \cite{FS}, \cite{MZ}, \cite{Solecki}.

\begin{theorem}\label{t:Hn} Each universally measurable null-finite set $A$ in a complete metric group $X$ is Haar-null.
\end{theorem}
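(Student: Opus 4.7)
I would construct an explicit Radon Borel probability measure $\nu$ on $X$ as a witness that $A$ is Haar-null, paralleling the proof of Theorem~\ref{t:Hm} but replacing Baire category by measure. Let $(a_n)_{n\in\w}$ be a null-sequence witnessing null-finiteness of $A$; as in that earlier proof, I may pass to a subsequence and assume that the $a_n$ are pairwise distinct, nonzero, and satisfy $\|a_n\|<2^{-n}$. Put $K_n:=\{\theta,a_n\}$, form the compact metric space $K:=\prod_{n\in\w}K_n$, and let $\Sigma:K\to X$, $\Sigma(v):=\sum_{n\in\w}v_n$, be the continuous summation map. Equip each $K_n$ with the uniform probability measure and let $\mu$ be their product; since $K$ is compact, $\mu$ is Radon, so $\nu:=\Sigma_*\mu$ is a Radon Borel probability measure on $X$. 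The goal is to show $\nu(z+A)=0$ for every $z\in X$.

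Suppose for contradiction that $\nu(z+A)>0$ for some $z\in X$. Universal measurability of $A$ implies that the preimage $B:=\Sigma^{-1}(z+A)$ is $\mu$-measurable with $\mu(B)>0$. Applying L\'evy's martingale convergence theorem to the filtration $(\mathcal F_N)_{N\in\w}$ generated by the first $N$ coordinates (equivalently, approximating $B$ from outside by cylinder sets), I obtain $v^*\in B$ and $N\in\w$ such that the cylinder $C:=\{v\in K:v_k=v^*_k\text{ for }k<N\}$ satisfies $\mu(B\cap C)>\tfrac34\mu(C)$. For each $n\ge N$, split $C=C^0_n\sqcup C^1_n$ according to whether the $n$-th coordinate is $\theta$ or $a_n$, and let $T_n:C^0_n\to C^1_n$ be the measure-preserving involution flipping $\theta$ to $a_n$ in slot $n$. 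Inclusion-exclusion inside $C^1_n$ gives
\[
\mu(E_n)\ge\mu(B\cap C^0_n)+\mu(B\cap C^1_n)-\mu(C^1_n)>\tfrac14\mu(C),
\]
where $E_n:=\{v\in B\cap C^0_n:T_n(v)\in B\}$. The reverse Fatou lemma then yields $\mu(\limsup_{n\ge N}E_n)>0$, so some $v\in K$ belongs to $E_n$ for infinitely many $n$. For each such $n$, $v_n=\theta$ and $\Sigma(T_n(v))=\Sigma(v)+a_n$, so both $\Sigma(v)$ and $\Sigma(v)+a_n$ lie in $z+A$. Setting $u:=\Sigma(v)-z$, I conclude that $u+a_n\in A$ for infinitely many $n$, contradicting the null-finiteness of $A$ witnessed by $(a_n)$.

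The principal obstacle I anticipate is an asymmetry between the forward flip $\theta\mapsto a_n$ and the backward flip $a_n\mapsto\theta$: if one worked with the full coordinate involution on $K$ rather than restricting to $C^0_n$, the infinitely many ``good'' indices would split by pigeonhole into forward and backward flips, and the backward case only yields $u-a_n\in A$ infinitely often, which need not contradict null-finiteness (the definition provides a single witness sequence, not closure under negation of it). Restricting attention to $E_n\subset C^0_n$ is the device that makes the argument asymmetric in the right way. A secondary subtlety is that the density-point step cannot be omitted: the inclusion-exclusion requires $B$ to occupy more than half of the conditional mass in some cylinder, which is strictly stronger than $\mu(B)>0$ alone.
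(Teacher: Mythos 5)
Your proof is correct, and it shares the paper's overall strategy---push a product measure on a countable product of finite sets forward under the summation map $\Sigma$ and derive a contradiction from near-invariance of $\Sigma^{-1}(z+A)$ under single-coordinate modifications that change $\Sigma$ by $+a_n$---but the technical execution is genuinely different. The paper works on $\Pi=\prod_{n\in\w}\{0,1,\dots,2^n\}$ rather than on the binary cube: the growing alphabets are there precisely so that the ``back-shift'' $s_m$ (subtracting $1$ from the $m$-th coordinate) is defined on a subcube $\Pi_l$ of measure close to $1$. It then replaces $\Sigma^{-1}(z+A)$ by a compact subset $K$ of positive measure (inner regularity) and shows by a compactness/regularity argument (Claim~\ref{claim}) that $\lambda(K\cap s_m(K))>(1-\e)\lambda(K)$ for all large $m$; iterating, the countable intersection $\bigcap_k s_{m_k}(K)$ still has positive measure, and any point $\vec b$ in it yields $u+a_{m_k}\in A$ for \emph{all} $k$. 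You instead stay on $\prod_n\{\theta,a_n\}$ (so your argument is the exact measure-theoretic analogue of the paper's category proof of Theorem~\ref{t:Hm}, which the paper's own measure proof is not), replace inner regularity and compactness by a density-point/martingale step producing a cylinder on which $B$ has conditional density $>\tfrac34$, and replace the countable intersection of shifts by inclusion--exclusion plus the reverse Fatou lemma, obtaining a point lying in infinitely many $E_n$. Your route is the standard Christensen-style cube argument and avoids the ad hoc alphabets; the paper's route avoids the density machinery and gets invariance under all chosen shifts simultaneously. Both handle correctly the directionality issue you flag (one must flip $\theta\mapsto a_n$, never $a_n\mapsto\theta$); in the paper this is encoded in the use of $s_{m_k}^{-1}$. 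The only points worth making explicit in your write-up are the trivial case $A=\emptyset$ (needed before you may assume every $a_n$ is nonzero, which is what makes each $K_n$ a genuine two-point set) and the remark that $z+A$ is $\nu$-measurable because universal measurability is translation-invariant; both are routine.
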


\begin{proof} To derive a contradiction, suppose that the null-finite set $A$ is not Haar-null in $X$. Since $A$ is null-finite, there exists a null-sequence $(a_n)_{n\in\w}$ such that for each $x\in X$ the set $\{n\in \w:x+a_n\in A\}$ is finite. Replacing $(a_n)_{n\in\w}$ by a suitable subsequence, we can assume that $\|a_n\|\le\frac1{4^n}$ for all $n\in\w$.

Consider the compact space $\Pi:=\prod_{n\in\w}\{0,1,\dots,2^n\}$ endowed with the product measure $\lambda$ of uniformly distributed measures on the finite discrete spaces $\{0,1,2,3,\dots,2^n\}$, $n\in\w$. Consider the map
$$\Sigma:\Pi\to X,\;\;\Sigma:(p_i)_{i\in\w}\mapsto\sum_{i=0}^\infty p_ia_i.$$ Since $\|p_ia_i\|\le 2^i\|a_i\|\le2^i\frac1{4^i}=\frac1{2^i}$, the series $\sum_{i=0}^\infty p_ia_i$ is convergent and the function $\Sigma$ is well-defined and continuous.

Since the set $A$ is not Haar-null, there exists an element $z\in X$ such that the preimage $\Sigma^{-1}(z+A)$ has positive $\lambda$-measure and hence contains a compact subset $K$ of positive measure. For every $n\in\w$ consider the subcube $\Pi_n:=\prod_{i=0}^{n-1}\{0,1,2,\dots,2^i\}\times \prod_{i=n}^\infty\{1,2,\dots,2^i\}$ of $\Pi$ and observe that $\lambda(\Pi_n)\to1$ as $n\to \infty$. Replacing $K$ by $K\cap \Pi_l$ for a sufficiently large $l$, we can assume that $K\subset\Pi_l$.
For every $m\ge l$ let $s_m:\Pi_l\to\Pi$ be the ``back-shift" defined by the formula $s_m((x_i)_{i\in\w}):=(y_i)_{i\in\w}$ where $y_i=x_i$ for $i\ne m$ and $y_i=x_i-1$ for $i=m$.

\begin{claim}\label{claim} For any compact set $C\subset\Pi_l$ of positive measure
 $\lambda(C)$ and any $\e>0$ there exists $k\ge l$ such that for any $m\ge k$ the intersection $C\cap s_m(C)$ has measure $\lambda(C\cap s_m(C))>(1-\e)\lambda(C)$.
\end{claim}

\begin{proof} By the regularity of the measure $\lambda$, the set $C$ has a neighborhood $O(C)\subset\Pi$ such that $\lambda(O(C)\setminus C)<\e\lambda(K)$. By the compactness of $C$, there exists $k\ge l$ such that for any $m\ge k$ the shift $s_m(C)$ is contained in $O(C)$. Hence $\lambda(s_m(C)\setminus C)\le \lambda(O(C)\setminus C)<\e\lambda(C)$
and thus $\lambda(s_m(C)\cap C)=\lambda(s_m(C))-\lambda(s_m(C)\setminus C)>\lambda(C)-\e\lambda(C)=(1-\e)\lambda(C)$.
\end{proof}

Using Claim~\ref{claim} we can choose an increasing number sequence $(m_k)_{k\in\w}$ such that $m_0>l$ and the set $K_\infty:=\bigcap_{k\in\w}s_{m_k}(K)$ has positive measure and hence contains a point $\vec b:=(b_i)_{i\in\w}$. It follows that for every $k\in\w$ the point $\vec b_k:=s_{m_k}^{-1}(\vec b)$ belongs to
$K\subset \Sigma^{-1}(z+A)$.

Observe that $\Sigma(\vec b_k)=\Sigma(\vec b)+a_{m_k}\in z+A$ and hence $-z+\Sigma(\vec b)+a_{m_k}\in A$ for all $k\in\w$, which contradicts the choice of the sequence $(a_n)_{n\in\w}$.
\end{proof}

\begin{remark} After writing the initial version of this paper we discovered that a result similar to Theorem~\ref{t:Hn} was independently found by Bingham and Ostaszewski \cite[Theorem 3]{BO1}.
\end{remark}

\section{The $\sigma$-ideal generated by (closed) Borel null-finite sets}

In this section we introduce two new invariant $\sigma$-ideals generated by (closed) Borel null-finite sets and study the relation of these new ideals to the $\sigma$-ideals of Haar-null and Haar-meager sets.

Namely, for a complete metric group $X$
let
\begin{itemize}
\item $\sigma\NF_X$ be the smallest $\sigma$-ideal containing all Borel null-finite sets in $X$;
\item $\sigma\overline{\NF}_X$ be the smallest $\sigma$-ideal containing all closed null-finite sets in $X$;
\item $\sigma\overline{\HN}_X$ be the smallest $\sigma$-ideal containing all closed Haar-null sets in $X$.
\end{itemize}
Theorems~\ref{t:Hm} and \ref{t:Hn} imply that $\sigma\overline{\NF}_X\subset\sigma\overline{\HN}_X$ and $\sigma\NF_X\subset\HN_X\cap\HM_X$.
So, we obtain the following diagram in which an arrow $\mathcal A\to\mathcal B$ indicates that $\mathcal A\subset\mathcal B$.
$$
\xymatrix{
{\sigma\overline{\NF}_X}\ar[r]\ar[d]&{\sigma\overline{\HN}_X}\ar[d]\\
{\sigma\NF_X}\ar[r]&{\HN_X\cap\HM_X}
}
$$ In Examples~\ref{ex1} and \ref{ex2} we show that the $\sigma$-ideal $\sigma\overline{\NF}_X$ is strictly smaller than $\sigma\overline{\HN}_X$ and the ideal $\sigma\NF_X$ is not contained in $\sigma\overline{\HN}_X$.

\begin{example}\label{ex1} The closed set $A=\prod_{n\ge 2}C_n^*$ in the product $X=\prod_{n\ge 2}C_n$ of  cyclic groups is Haar-null but cannot be covered by countably many closed null-finite sets. Consequently, $A\in\sigma\overline{\HN}_X\setminus\sigma\overline{\NF}_X$.
\end{example}

\begin{proof} The set $A=\prod_{n=2}^\infty C_n^*$ has Haar measure $$\prod_{n=2}^\infty\frac{|C_n^*|}{|C_n|}=\prod_{n=2}^\infty\frac{n-1}n=0$$
and hence is Haar-null in the compact Polish group $X$.

Next, we show that $A$ cannot be covered by countably many closed null-finite sets. To derive a contradiction, assume that $A=\bigcup_{n\in\w}A_n$ where each $A_n$ is closed and null-finite in $X$. By the Baire Theorem, for some $n\in\w$ the set $A_n$ has non-empty interior in $A$. Consequently, we can find $m>2$ and $a\in\prod_{n=2}^{m-1}C_n^*$ such that $\{a\}\times\prod_{n=m}^\infty C_n^*\subset A_n$. By Example~\ref{ex}, the set $\prod_{n=m}^\infty C_n^*$ is not null-finite in $\prod_{n=m}^\infty C_n$, which implies that the set $A_n\supset \{a\}\times\prod_{n=m}^\infty C_n^*$ is not null-finite in the group $X$. But this contradicts the choice of $A_n$.
\end{proof}

Our next example shows that $\sigma\NF_X\not\subset \sigma\overline{\HN}_X$ for some compact Polish group $X$.

\begin{example}\label{ex2} For any function $f:\w\to[2,\infty)$ with $\prod_{n\in\w}\frac{f(n)-1}{f(n)}>0$, the compact metrizable group $X=\prod_{n\in\IN}C_{f(n)}$ contains a null-finite $G_\delta$-set $A\subset X$ which cannot be covered by countably many closed Haar-null sets in $X$. Consequently, $A\in\NF_X\setminus\sigma\overline{\HN}_X$.
\end{example}

\begin{proof} For every $n\in\w$ let $g_n$ be a generator of the cyclic group $C_{f(n)}$. In the compact metrizable group $X=\prod_{n\in\w}C_{f(n)}$ consider the null-sequence $(x_n)_{n\in\w}$ defined by the formula $$x_n(i)=\begin{cases}\theta&\mbox{if $i\le n$}\\
g_i&\mbox{if $i>n$}.
\end{cases}
$$
Consider the closed subset $B=\prod_{n\in\w}C_{f(n)}^*$ in $X$ and observe that it has positive Haar measure, equal to the infinite product $$\prod_{n\in\w}\frac{f(n)-1}{f(n)}>0.$$
It is easy to see that for every $n\in\w$ the set $C_{f(n)}^*=C_{f(n)}\setminus\{\theta\}$ is not equal to $C_{f(n)}^*\cap (g_n+C_{f(n)}^*)$, which implies that the intersection $B\cap (x_n+B)$ is nowhere dense in $B$. Consequently, the set  $A=B\setminus\bigcup_{n\in\w}(x_n+B)$ is a dense $G_\delta$-set in $B$.

We claim that the set $A$ is null-finite. Given any $a\in X$ we should prove that the set $\{n\in\w:a+x_n\in A\}$ is finite. If $a\notin B$, then the open set $X\setminus B$ is a neighborhood of $a$ in $X$. Since the sequence $(a+x_n)_{n\in\w}$ converges to $a\in X\setminus B$, the set $\{n\in\w:a+x_n\in B\}\supset\{n\in\w:a+x_n\in A\}$ is finite.

If $a\in B$, then $\{n\in\w:a+x_n\in A\}\subset \{n\in\w:(B+x_n)\cap A\ne\emptyset\}=\emptyset$ by the definition of the set $A$.

Next, we prove that the $G_\delta$-set $A$ cannot be covered by countably many closed Haar-null sets. To derive a contradiction, assume that $A\subset \bigcup_{n\in\w}F_n$ for some closed Haar-null sets $F_n\subset X$. Since the space $A$ is Polish, we can apply the Baire Theorem and find $n\in\w$ such that the set $A\cap F_n$ has non-empty interior in $A$ and hence its closure $\overline{A\cap F_n}$ has non-empty interior in $B=\prod_{n\in\w}C_{f(n)}^*$. It is easy to see that each non-empty open subset of $B$ has positive Haar measure in $X$. Consequently, the set $\overline{A\cap F_n}$ has positive Haar measure, which is not possible as this set is contained in the Haar-null set $F_n$.
\end{proof}

\begin{remark} Answering a question posed in a preceding version of this paper, Adam Kwela \cite{Kwela} constructed two compact null-finite subsets  $A,B$ on the real line, whose union $A\cup B$ is not null-finite. This means that the family of subsets of (closed) Borel null-finite subsets on the real line is not an ideal, and the ideal $\sigma\overline{\NF}_\IR$ contains compact subsets of the real line, which fail to be null-finite.
\end{remark}

\section{Decomposing non-discrete metric groups into unions of two null-finite sets}\label{s:union}

By Theorem~\ref{t:Hn} and the countable additivity of the family of Borel Haar-null sets \cite{Ch}, the countable union of Borel null-finite sets in a complete metric group $X$ is Haar-null in $X$ and hence is not equal to $X$. So, a complete metric group cannot be covered by countably many Borel null-finite sets. This result dramatically fails for non-Borel null-finite sets.

\begin{theorem}\label{t:union} Each non-discrete metric group $X$ can be written as the union $X=A\cup B$ of two null-finite subsets $A,B$ of $X$.
\end{theorem}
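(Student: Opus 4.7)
\medskip

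\noindent\textbf{Proof plan.} The plan is to reduce the problem to a countable subgroup and then construct the partition by a bookkeeping induction over $\w$. Since $X$ is non-discrete, it contains a null-sequence $(s_n)_{n\in\w}$ of pairwise distinct non-zero elements; setting $x_n:=s_{2n}$ and $y_n:=s_{2n+1}$ gives two disjoint null-subsequences. Let $H$ be the countable subgroup of $X$ generated by $\{x_n,y_n:n\in\w\}$, and fix coset representatives $\{z_\alpha\}$ so that $X=\bigsqcup_\alpha(z_\alpha+H)$. I will build a single subset $A_0\subset H$ and then define $A:=\bigcup_\alpha(z_\alpha+A_0)$ and $B:=\bigcup_\alpha(z_\alpha+(H\setminus A_0))$. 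Since $x_n,y_n\in H$, every $z\in X$ lies in a unique $z_\alpha+H$; writing $z=z_\alpha+h$, one has $z+x_n\in A$ iff $h+x_n\in A_0$ (and analogously for $y_n$ and $B$). Thus null-finiteness of $A$ and $B$ in $X$ with witnesses $(x_n)$ and $(y_n)$ will follow from the analogous property of $A_0$ and $H\setminus A_0$ inside $H$.

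The key combinatorial fact I need is that for every $d\in X$ the equation $x_n-y_m=d$ has only finitely many solutions $(n,m)\in\w\times\w$. For $d=\theta$ this is clear because $\{x_n\}\cap\{y_m\}=\emptyset$. For $d\ne\theta$, pick $N\in\w$ so that $\|x_n\|,\|y_m\|<\|d\|/2$ whenever $n,m\ge N$; the triangle inequality gives $\|x_n-y_m\|<\|d\|$, ruling out $x_n-y_m=d$. Hence every solution has $n<N$ or $m<N$; for each such fixed coordinate the other is determined uniquely (or not at all) by the pairwise distinctness of the $x$'s and $y$'s, yielding at most $2N$ solutions in total.

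Enumerate $H=\{h_k:k\in\w\}$ and pick integers $N_k$ by induction on $k$, each time aiming to impose the declarations
\[
P_k:=\{h_k+x_n:n\ge N_k\}\subset H\setminus A_0,\qquad Q_k:=\{h_k+y_n:n\ge N_k\}\subset A_0.
\]
The same-stage constraint $P_k\cap Q_k=\emptyset$ holds for free, because $\{x_n\}\cap\{y_m\}=\emptyset$. A cross-stage conflict $P_k\cap Q_{k'}\ne\emptyset$ (with $k'<k$) reduces to the equation $x_n-y_m=h_{k'}-h_k$ with $m\ge N_{k'}$, which by the key fact has only finitely many $(n,m)$; the symmetric obstruction $Q_k\cap P_{k'}\ne\emptyset$ is handled identically. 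Since there are only finitely many predecessors $k'<k$, one can take $N_k$ strictly larger than every forbidden index, eliminating every conflict.

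Finally, set $A_0:=H\setminus\bigcup_{k\in\w}P_k$; the conflict-free construction then gives $\bigcup_{k\in\w}Q_k\subset A_0$, so for every $k\in\w$ both $\{n:h_k+x_n\in A_0\}$ and $\{n:h_k+y_n\notin A_0\}$ are contained in $\{0,\dots,N_k-1\}$ and hence finite. Lifting to $X$ via the coset decomposition, $A$ is null-finite with witness $(x_n)$, $B$ is null-finite with witness $(y_n)$, and clearly $A\cup B=X$. The main obstacle is arranging the induction so that only finitely many indices are forbidden at each stage; this is exactly where the combinatorial fact about $x_n-y_m$ does the real work.
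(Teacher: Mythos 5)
Your proof is correct. It shares the paper's overall architecture: pass to a countable subgroup, realize one piece as a union of tails of translates of a null-sequence and the other as its complement, with a second null-sequence witnessing null-finiteness of the first piece, and finally lift to all of $X$ via a transversal of the cosets (Axiom of Choice). The genuine difference is in how the two witnessing sequences and their disjointness properties are produced. The paper builds $(u_n)$ and $(v_n)$ by an avoidance induction inside the countable group $Z$, arranging that the cross intersections $\{x+u_n\}_{n}\cap\{y+v_m\}_{m}$ are finite for all $x,y\in Z$ \emph{and} that the same-type intersections $\{x+u_n\}_n\cap\{y+u_m\}_m$, $\{x+v_n\}_n\cap\{y+v_m\}_m$ are finite for distinct $x,y$; it then runs a second induction to choose the tail cut-offs $i_n$. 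You instead take $x_n=s_{2n}$, $y_n=s_{2n+1}$ from a single null-sequence of pairwise distinct points and obtain the one fact you actually use --- that $x_n-y_m=d$ has only finitely many solutions --- directly from the triangle inequality. This buys two things: the witnessing sequences come for free (one induction instead of two), and your bookkeeping makes clear that the same-type finiteness conditions are dispensable, since overlaps among the $P_k$'s (or among the $Q_k$'s) are harmless once $A_0$ is defined as the complement of $\bigcup_k P_k$. Your verification of the key fact (at most $2N$ solutions, by fixing the small coordinate and using pairwise distinctness) and of the coset-lifting step are both sound, so the argument is complete; it is a modest but real streamlining of the paper's proof.
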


\begin{proof}
Being non-discrete, the metric group $X$ contains a non-trivial null-sequence, which generates a non-discrete countable subgroup $Z$ in $X$.

Let $Z=\{z_n\}_{n\in\w}$ be an enumeration of the countable infinite group $Z$ such that $z_0=\theta$ and $z_n\ne z_m$ for any distinct numbers $n,m\in\w$. By induction we can construct sequences $(u_n)_{n\in\w}$ and $(v_n)_{n\in\w}$ in $Z$ such that $u_0=v_0=\theta$ and for every $n\in\IN$ the following two conditions are satisfied:
\begin{enumerate}
\item $\|u_n\|\le\frac1n$ and $u_n\notin\{-z_i+z_j+v_k:i,j\le n,\;k<n\}\cup\{-z_i+z_j+u_k:i,j\le n,\;k<n\}$;
\item $\|v_n\|\le\frac1n$ and $v_n\notin\{-z_i+z_j+u_k:i,j\le n,\;k\le n\}\cup\{-z_i+z_j+v_k:i,j\le n,\;k<n\}$.
\end{enumerate}
At the $n$-th step of the inductive construction, the choice of the
points $u_n,v_n$ is always possible as the ball $\{z\in Z:\|z\|\le\frac1n\}$ is infinite and $u_n,v_n$ should avoid finite sets.

After completing the inductive construction, we obtain the null-sequences $(u_n)_{n\in\w}$ and $(v_n)_{n\in\w}$ of pairwise distinct points of $Z$ such that for any points $x,y\in Z$ the intersection $\{x+u_n\}_{n\in\w}\cap \{y+u_m\}_{m\in\w}$ is finite and for every distinct points $x,y\in Z$ the intersections $\{x+u_n\}_{n\in\w}\cap\{y+u_m\}_{m\in\w}$ and $\{x+v_n\}_{n\in \w}\cap\{y+v_m\}_{m\in\w}$ are finite.

Using these facts, for every $n\in\w$ we can choose a number $i_n\in\w$ such that the set $\{z_n+v_k:k\ge i_n\}$ is disjoint with the set $\{z_i+v_m:i<n,\;m\in\w\}\cup\{z_i+u_m:i\le n,\;m\in\w\}$ and the set $\{z_n+u_k:k\ge i_n\}$ is disjoint with the set $\{z_i+u_m:i<n,\;m\in\w\}\cup\{z_i+v_m:i\le n,\;m\in\w\}$.

We claim that the set $A:=\bigcup_{n\in\w}\{z_n+u_m:m>i_n\}$ is null-finite in $Z$. This will follow as soon as we verify the condition:
\begin{enumerate}
\item[(3)] for any $z\in Z$ the set $\{n\in\w:z+v_n\in A\}$ is finite.
\end{enumerate}
Find $j\in\w$ such that $z=z_j$ and observe that for every $n\ge j$ the choice of the number $i_n$ guarantees that $\{z_n+u_m\}_{m> i_n}\cap\{z_j+v_n\}_{n\in\w}=\emptyset$, so $\{n\in\w:z+v_n\in A\}$ is contained in the finite set $\bigcup_{n<j}\{z_n+v_m\}_{m\in\w}\cap\{z_j+u_m\}_{m\in\w}$ and hence is finite.

Next, we show that the set $B:=Z\setminus A$ is null-finite in $Z$. This will follow as soon as we verify the condition
\begin{enumerate}
\item[(4)] for any $z\in Z$ the set $\{m\in\w:z+u_m\in B\}$ is finite.
\end{enumerate}
Find a number $n\in\w$ such that $z=z_n$ and observe that $\{m\in\w:z+u_m\in B\}=\{m\in\w:z_n+u_m\notin A\}\subset\{m\in\w:m\le i_n\}$ is finite.
Therefore, the sets $A$ and $B=Z\setminus A$ are null-finite in $Z$.
\smallskip

Using Axiom of Choice, choose a subset $S\subset X$ such that for every $x\in X$ the intersection $S\cap(x+Z)$ is a singleton. It follows that $X=S+Z=(S+A)\cup(S+B)$. We claim that the sets $S+A$ and $S+B$ are null-finite in $X$. This will follow as soon as for any $x\in X$ we check that the sets $\{n\in\w:x+v_n\in S+A\}$ and $\{n\in\w:x+u_n\in S+B\}$ are finite. Since $X=S+Z$, there exist elements $s\in S$ and $z\in Z$ such that $x=s+z$. Observe that if for $n\in\w$ we get $x+v_n\in S+A$, then $s+z+v_n=t+a$ for some $t\in S$ and $a\in A\subset Z$.
Subtracting $a$ from both sides, we get $t=s+z+v_n-a\in s+Z$ and hence $t=s$ as $S\cap(s+Z)$ is a singleton containing both points $t$ and $s$.
So, the inclusion $x+v_n=s+z+v_n\in S+A$ is equivalent to $z+v_n\in A$. By analogy we can prove that $x+u_n\in S+B$ is equivalent to $z+u_n\in B$.
Then the sets
$$
\{n\in\w:x+v_n\in S+A\}=\{n\in\w:z+v_n\in A\}\mbox{ \  and \ }
\{n\in\w:x+u_n\in S+B\}=\{n\in\w:z+u_n\in B\}
$$ are finite by the properties (3) and (4) of the null-finite sets $A,B$.
\end{proof}

Proposition~\ref{p:Fempty} and Theorem~\ref{t:union} imply the following corollary.

\begin{corollary} Let $X$ be a non-discrete metric group.
\begin{enumerate}
\item Each null-finite subset  of $X$ is contained in some invariant ideal on $X$;
\item For any ideal $\I$ on $X$ there exists a null-finite set $A\subset X$ such that $A\notin \I$.
\end{enumerate}
\end{corollary}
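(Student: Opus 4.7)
The plan for part (1) is to take for $\I$ the smallest invariant ideal of sets that is generated by the null-finite set $N$, namely
$$\I:=\bigl\{B\subset X:\exists\, k\in\IN,\;\exists\, x_1,\dots,x_k\in X\text{ with }B\subset (x_1+N)\cup\cdots\cup(x_k+N)\bigr\}.$$
This family is closed under subsets and finite unions and is translation-invariant by construction, so the only thing to check is the non-triviality axiom $X\notin\I$. Equivalently, one must show that $X\ne F+N$ for every finite set $F\subset X$. This is exactly what Proposition~\ref{p:Fempty} supplies: the set $F+N$ has empty interior in $X$, while $X$ itself is an open subset of $X$, so the two cannot coincide. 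Thus $\I$ is a genuine invariant ideal on $X$ containing $N$.

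The plan for part (2) is to apply Theorem~\ref{t:union}, which (using that $X$ is non-discrete) decomposes $X=A\cup B$ with both summands null-finite. Since $\I$ is an ideal, $X\notin\I$; but an ideal is closed under finite unions, so at least one of $A$, $B$ must fall outside $\I$. That set is the required null-finite witness.

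There is no real obstacle to either step: part (1) is a one-line consequence of Proposition~\ref{p:Fempty} applied to the principal invariant ideal generated by $N$, and part (2) is a one-line consequence of Theorem~\ref{t:union} together with the defining properties of an ideal. The only thing to be a little careful with in part (1) is that the ideal axioms require $X\notin\I$, which is where the empty-interior conclusion of Proposition~\ref{p:Fempty} is used in an essential way (the non-discreteness of $X$ is not needed here, but is needed for part (2) to guarantee the existence of the decomposition from Theorem~\ref{t:union}).
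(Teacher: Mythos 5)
Your proposal is correct and follows essentially the same route as the paper: part (1) uses the principal invariant ideal $\{I\subset X:\exists F\in[X]^{<\w}\; I\subset F+N\}$ together with Proposition~\ref{p:Fempty} to verify $X\notin\I$, and part (2) applies the decomposition $X=A\cup B$ from Theorem~\ref{t:union} and the closure of ideals under finite unions. No gaps.
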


\begin{proof}
1. For any null-finite set $A$ in $X$ the family $$\I_A=\{I\subset X:\exists F\in[X]^{<\w}\;I\subset FA\}$$ is an invariant ideal on $X$ whose elements have empty interiors in $X$ by Proposition~\ref{p:Fempty}. We recall that by $[X]^{<\w}$ we denote the ideal of finite subsets of $X$.
\smallskip

2. By Theorem~\ref{t:union}, the group $X$ can be written as the union $X=A\cup B$ of two null-finite sets. Then for any ideal $\I$ on $X$ one of the sets $A$ or $B$ does not belong to $\I$.
\end{proof}

\section{Applying null-finite sets to additive functionals}\label{s:functional}

In this section we apply null-finite sets to prove a criterion of continuity of additive functionals on metric groups.

A function $f:X\to Y$ between groups is called {\em additive} if $f(x+y)=f(x)+f(y)$ for every $x,y\in X$. An additive function into the real line is called an {\em additive functional}.

\begin{theorem}\label{t1} An additive functional $f:X\to\IR$ on a metric group $X$ is continuous if it is upper bounded on a set $B\subset X$ which is not null-finite.
\end{theorem}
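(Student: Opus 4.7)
The plan is to argue by contradiction, using the fact that additive functionals are continuous everywhere as soon as they are continuous at $\theta$ (since $f(x+h)-f(x)=f(h)$). So I would assume $f$ is discontinuous at $\theta$ and produce, from the boundedness of $f$ on $B$, a contradiction with the non-null-finiteness of $B$.

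First, I would extract the standard discontinuity data: there exist $\e>0$ and a null-sequence $(y_n)_{n\in\w}$ with $|f(y_n)|>\e$ for all $n$. The crucial next step is to \emph{amplify} this discontinuity into a null-sequence along which $f$ blows up. Passing to a subsequence, I may assume $\|y_n\|<1/n^2$. Setting $w_n:=n\cdot y_n$ and using the prenorm estimate $\|n\cdot y\|\le n\|y\|$ (a direct consequence of the triangle inequality), I obtain $\|w_n\|<1/n$, so $(w_n)_{n\in\w}$ is still a null-sequence. By additivity $|f(w_n)|=n|f(y_n)|>n\e$, so $|f(w_n)|\to\infty$. Passing to a further subsequence and replacing $w_n$ by $-w_n$ if necessary (which is harmless since $-w_n\to\theta$ as well), I arrange $f(w_n)\to+\infty$.

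Finally, I would invoke the hypothesis on $B$. Since $B$ is not null-finite, the definition applied to the null-sequence $(w_n)_{n\in\w}$ yields a point $x\in X$ such that the set $\{n\in\w:x+w_n\in B\}$ is infinite. Letting $M:=\sup f(B)<\infty$, for every such $n$ additivity gives $f(x)+f(w_n)=f(x+w_n)\le M$, hence $f(w_n)\le M-f(x)$. This contradicts $f(w_n)\to+\infty$ and completes the proof.

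The only delicate point is the amplification step: one must know that scaling a null-sequence by integers while shrinking the terms produces another null-sequence, which relies on the fact that $X$ is a \emph{metric} group with an invariant prenorm satisfying $\|n\cdot y\|\le n\|y\|$. After that, everything is routine: the reduction to continuity at $\theta$, the use of additivity to obtain $|f(w_n)|=n|f(y_n)|$, and the direct application of the definition of non-null-finiteness to close the argument.
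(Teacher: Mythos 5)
Your proposal is correct and follows essentially the same route as the paper: amplify the discontinuity at $\theta$ by integer scaling (the paper uses $z_n=\pm 2^nx_n$ with $\|x_n\|\le 4^{-n}$, you use $w_n=\pm n\,y_n$ with $\|y_n\|<n^{-2}$) to get a null-sequence along which $f\to+\infty$, then play this against the upper bound of $f$ on $B$. The only cosmetic difference is that the paper shows directly that its null-sequence witnesses null-finiteness of $B$, whereas you invoke non-null-finiteness to produce a translate hitting $B$ infinitely often and then derive the contradiction; these are the same argument in contrapositive form.
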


\begin{proof} Suppose that the functional  $f$ is not
continuous. Then there exists $\e>0$ such that $f(U)\not\subset (-\e,\e)$ for each neighborhood $U\subset X$ of zero. It follows that for every $n\in\w$ there is a point $x_n\in X$ such that $\|x_n\|\le \frac{1}{4^n}$ and $|f(x_n)|>\e$. Observe that, $\|2^nx_n\|\le 2^n\cdot\|x_n\|\le\frac1{2^n}$ and $|f(2^nx_n)|=2^n|f(x_n)|>2^n\e$. Choose $\e_n\in\{1,-1\}$ such that $\e_nf(2^nx_n)$ is positive and put $z_n:=\e_n2^nx_n$. Then $f(z_n)>2^n\e$ and $\|z_n\|\le\frac1{2^n}$.

We claim that the null-sequence $(z_n)_{n\in\w}$ witnesses that the set $B$ is null-finite. Given any point $x\in X$ we need to check that the set $\Omega:=\{n\in\w:x+z_n\in B\}$ is finite.

Let $M:=\sup f(B)$ and observe that for every $n\in \Omega$ we have $$M\ge f(x+z_n)=f(x)+f(z_n)\ge f(x)+2^n\e,$$ which implies $2^n\le \frac1{\e}(M-f(x))$. Hence the set $\Omega$ is finite. Therefore, the null-sequence $(z_n)_{n\in\w}$ witnesses that the set $B$ is null-finite, which contradicts our assumption.
\end{proof}

Theorems~\ref{t1} and \ref{t:Hm} imply

\begin{corollary} An additive functional $f:X\to\IR$ on a complete metric group is continuous if and only if it is upper bounded on some uBP set $B\subset X$ which is not Haar-meager.
\end{corollary}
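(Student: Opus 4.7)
The plan is to deduce this corollary as a direct synthesis of Theorems~\ref{t1} and \ref{t:Hm}; essentially no new machinery is required. The sufficiency direction ($\Leftarrow$) is the substantive one but follows by simply chaining the two theorems, while the necessity direction ($\Rightarrow$) is a matter of producing a witness set and checking a small Baire-category fact.

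For sufficiency, I would argue by contrapositive of Theorem~\ref{t:Hm}: if $B$ is uBP and not Haar-meager in the complete metric group $X$, then $B$ cannot be null-finite (since every null-finite uBP-set in $X$ is Haar-meager). Applying Theorem~\ref{t1} to the non-null-finite set $B$, on which $f$ is upper bounded by hypothesis, then immediately delivers the continuity of $f$.

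For necessity, I would assume $f$ is continuous and take $B$ to be any open neighborhood of $\theta$ on which $|f|<1$ (which exists by continuity of $f$ at $\theta$). Such $B$ is open, hence Borel and uBP, and $f$ is upper bounded on $B$ by construction; the remaining point is to verify that $B$ is not Haar-meager. For this, given any continuous map $g:K\to X$ from a non-empty compact metric space $K$, I would pick any $k\in K$ and any $v\in B$ and set $x:=g(k)-v$; then $k\in g^{-1}(x+B)$, so $g^{-1}(x+B)$ is a non-empty open subset of the compact Hausdorff (and therefore Baire) space $K$, and hence non-meager in $K$. Since $g$ was arbitrary, $B$ is not Haar-meager.

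There is no serious obstacle, as the heavy lifting is already contained in Theorems~\ref{t1} and~\ref{t:Hm}. The only mildly delicate point is the observation in the $(\Rightarrow)$ direction that any subset of $X$ with non-empty interior is not Haar-meager, which is a routine consequence of the fact that any non-empty open subset of a compact Hausdorff space is non-meager in itself.
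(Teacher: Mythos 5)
Your proposal is correct and follows exactly the route the paper intends: the paper derives this corollary directly from Theorems~\ref{t1} and \ref{t:Hm} with no further argument, and your sufficiency direction is precisely that chaining (non-Haar-meager uBP $\Rightarrow$ not null-finite $\Rightarrow$ continuity via Theorem~\ref{t1}). Your verification of the necessity direction --- that a non-empty open set cannot be Haar-meager because for any continuous $g:K\to X$ from a non-empty compact metrizable (hence Baire) space one can translate $B$ to make $g^{-1}(x+B)$ a non-empty open, hence non-meager, subset of $K$ --- is a correct filling-in of the detail the paper leaves implicit.
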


An analogous result for Haar-null sets follows from Theorems~\ref{t1} and \ref{t:Hn}.

\begin{corollary}\label{c2} An additive functional $f:X\to\IR$ on a complete metric group is continuous if and only if it is upper bounded on some universally measurable set $B\subset X$ which is not Haar-null.
\end{corollary}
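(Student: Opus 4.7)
The corollary is an immediate consequence of Theorems~\ref{t1} and~\ref{t:Hn}: the former converts upper boundedness on a non-null-finite set into continuity, while the latter shows that every universally measurable non-Haar-null set automatically fails to be null-finite. So the plan is to splice these two facts together for the substantive direction and to handle the reverse implication by a short continuity argument.

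For the ``if'' direction I would argue by contraposition at the level of the witnessing set. Suppose $B\subset X$ is universally measurable, not Haar-null, and $f$ is upper bounded on $B$. By the contrapositive of Theorem~\ref{t:Hn}, the set $B$ fails to be null-finite, and Theorem~\ref{t1} then yields the continuity of $f$. This is a formal chain of implications with no computations to carry out.

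For the ``only if'' direction I would use continuity of $f$ at $\theta$ to choose an open neighborhood $U$ of $\theta$ with $f(U)\subset[-1,1]$; the set $U$ is Borel, hence universally measurable, and $f$ is bounded above on it by $1$. The remaining task is to verify that $U$ is not Haar-null. Exploiting the (implicit) Polish structure of $X$, I would cover $X$ by countably many translates $\{x_n+U\}_{n\in\w}$; if $U$ were Haar-null, then the translation-invariance and $\sigma$-additivity of $\HN_X$ would place $X$ itself in $\HN_X$, contradicting the fact that a Borel probability measure cannot vanish on every translate of the whole group. The main subtlety, such as it is, lies in this last step: ensuring that a non-empty open set lies outside $\HN_X$ tacitly requires a separability-type hypothesis on $X$, consistent with the Polish setting in which the Haar-null ideal is customarily considered.
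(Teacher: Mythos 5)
Your substantive direction is exactly the paper's (one-line) argument: apply the contrapositive of Theorem~\ref{t:Hn} to conclude that $B$ is not null-finite, then invoke Theorem~\ref{t1}; that part is complete and correct. The only weak point is in the converse, which the paper leaves implicit: your proof that a non-empty open neighborhood $U$ of $\theta$ is not Haar-null covers $X$ by countably many translates of $U$, which requires separability, while the corollary is stated for arbitrary complete metric groups. This is easily repaired with no separability assumption: if a Radon probability measure $\mu$ witnessed that $U$ is Haar-null, choose a compact set $K$ with $\mu(K)>0$ and a point $u_0\in U$; since $K\subset\bigcup_{x\in K}\bigl((x-u_0)+U\bigr)$ is an open cover, finitely many translates of $U$ cover $K$, forcing $\mu(K)=0$, a contradiction.
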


\section{Applying null-finite sets to additive functions}\label{s:Banach}

In this section we prove some continuity criteria for additive functions with values in Banach spaces or locally convex spaces.

\begin{corollary}\label{c:Banach} An additive function $f:X\to Y$ from a complete metric group $X$ to a  Banach space $Y$ is continuous if for any linear continuous functional $y^*:Y\to\IR$ the function $y^*\circ f:X\to\IR$ is upper bounded on some set $B\subset X$ which is not null-finite in $X$.
\end{corollary}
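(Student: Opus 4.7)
The plan is to reduce the vector-valued problem to the scalar-valued Theorem~\ref{t1} by composing $f$ with continuous linear functionals, and then to promote the resulting weak (functional-wise) continuity to norm continuity of $f$ via a Baire-category argument.

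First, for every $y^*\in Y^*$, the composition $y^*\circ f\colon X\to\IR$ is an additive functional on the metric group $X$ and, by hypothesis, is upper bounded on some subset of $X$ that is not null-finite. Theorem~\ref{t1} therefore gives that $y^*\circ f$ is continuous on $X$.

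Next, for every $n\in\IN$ consider the set
$$
C_n:=\{x\in X:\|f(x)\|_Y\le n\}=\bigcap_{\|y^*\|\le 1}\{x\in X:|y^*(f(x))|\le n\},
$$
the equality being the Hahn--Banach formula for the norm on $Y$. By the continuity of each $y^*\circ f$ proved in the previous step, every set on the right is closed, so $C_n$ itself is closed in $X$. Since $X=\bigcup_{n\in\IN}C_n$ is a complete metric (hence Baire) space, some $C_n$ has non-empty interior; picking a point $x_0$ of that interior and a neighborhood $V$ of $\theta$ with $x_0+V\subset C_n$, we obtain
$$
\|f(v)\|_Y=\|f(x_0+v)-f(x_0)\|_Y\le n+\|f(x_0)\|_Y=:M\quad\text{for every }v\in V.
$$

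Finally, I would upgrade this boundedness of $f$ on $V$ to continuity of $f$ at $\theta$ by the usual scaling trick for group homomorphisms. Given $\e>0$, choose $k\in\IN$ with $M/k<\e$ and, by continuity of addition in $X$, a neighborhood $W$ of $\theta$ whose $k$-fold sum $W+\cdots+W$ lies in $V$. For every $w\in W$ the element $kw=w+\cdots+w$ belongs to $V$, so additivity of $f$ gives $k\,\|f(w)\|_Y=\|f(kw)\|_Y\le M$, hence $\|f(w)\|_Y\le M/k<\e$. Thus $f$ is continuous at $\theta$ and, being additive, continuous on $X$. The main obstacle is precisely the middle step: bridging the gap between continuity of each $y^*\circ f$ and norm boundedness of $f$ on a common neighborhood of $\theta$; it is this step that uses completeness of $X$ (via Baire) together with the Hahn--Banach description of the norm on $Y$.
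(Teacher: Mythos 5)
Your proof is correct, but it follows a genuinely different route from the paper's. The paper argues by contradiction: assuming $f$ is discontinuous, it extracts a null-sequence $(2^nx_n)_{n\in\w}$ in $X$ with $\|f(2^nx_n)\|\ge 2^n\e$, so that the set $\{f(2^nx_n):n\in\w\}$ is norm-unbounded in $Y$; the Banach--Steinhaus principle (a weakly bounded set is norm bounded, applied in the dual) then produces a single functional $y^*$ for which $\big(y^*\circ f(2^nx_n)\big)_{n\in\w}$ is unbounded, contradicting the continuity of $y^*\circ f$ supplied by Theorem~\ref{t1}. You instead establish the continuity of every $y^*\circ f$ first and then run a Baire-category argument on $X$ itself: the sets $C_n=\{x\in X:\|f(x)\|\le n\}$ are closed by the Hahn--Banach description of the norm, so some $C_n$ has non-empty interior, giving local boundedness of $f$, which the scaling trick $\|f(w)\|=\tfrac1k\|f(kw)\|$ converts into continuity at $\theta$. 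Both arguments are ultimately Baire-category arguments, but applied in different spaces: the paper applies it in the (automatically complete) dual $Y^*$ and never uses the completeness of $X$ --- its proof in fact works for an arbitrary metric group $X$ --- whereas your argument genuinely needs $X$ to be a Baire space, which the stated hypotheses do grant you. Within those hypotheses both proofs are valid; yours has the minor advantages of avoiding the contradiction/subsequence bookkeeping and of yielding the local boundedness of $f$ explicitly along the way.
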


\begin{proof} Assuming that $f$ is discontinuous, we can find $\e>0$ such that for each neighborhood $U\subset X$ of $\theta$ we get $f(U)\not\subset B_\e:=\{y\in Y:\|y\|<\e\}$. Then for every $n\in\w$ we can find a point $x_n\in X$ such that $\|x_n\|\le\frac1{4^n}$ and $\|f(x_n)\|\ge \e$. Since $\|2^nx_n\|\le 2^n\|x_n\|\le \frac1{2^n}$, the sequence $(2^nx_n)_{n\in\w}$ is a null-sequence in $X$. On the other hand, $\|f(2^nx_n)\|=2^n\|f(x_n)\|\ge 2^n\e$ for all $n\in\w$, which implies that the set $\{f(2^nx_n)\}_{n\in\w}$ is unbounded in the Banach space $Y$. By the Banach-Steinhaus Uniform Boundedness Principle \cite[3.15]{FA}, there exists a linear continuous functional $y^*:Y\to\IR$ such that the set $\{y^*\circ f(2^nx_n):n\in\w\}$ is unbounded in $\IR$.
By our assumption, the additive functional $y^*\circ f:X\to\IR$ is upper bounded on some set $B\subset X$ which is not null-finite in $X$. By Theorem~\ref{t1}, the additive functional $y^*\circ f$ is continuous and thus the sequence $\big(y^*\circ f(2^nx_n)\big)_{n\in\w}$ converges to zero and hence cannot be unbounded in $\IR$. This is a desired contradiction, completing the proof.
\end{proof}

Corollary~\ref{c:Banach} admits a self-generalization.

\begin{theorem}\label{t:lc} An additive function $f:X\to Y$ from a complete metric group $X$ to a locally convex space $Y$ is continuous if for any linear continuous functional $y^*:Y\to\IR$ the function $y^*\circ f:X\to\IR$ is upper bounded on some set $B\subset X$ which is not null-finite in $X$.
\end{theorem}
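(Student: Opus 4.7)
The plan is to repeat the argument of Corollary~\ref{c:Banach} almost verbatim, replacing the appeal to the Banach--Steinhaus principle by the following standard duality fact (a consequence of the Hahn--Banach theorem together with Mackey's theorem): a subset of a locally convex space is bounded if and only if every continuous linear functional is bounded on it. Everything else in the Banach proof used only a continuous seminorm that grows along a null-sequence, so it survives verbatim once this replacement is made.

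First I would assume, toward a contradiction, that $f$ is discontinuous at $\theta$. Since the topology on $Y$ is generated by continuous seminorms, there exist a continuous seminorm $p$ on $Y$ and $\e>0$ such that $f^{-1}(\{y\in Y:p(y)<\e\})$ fails to be a neighborhood of $\theta$ in $X$. Hence for every $n\in\w$ one can choose $x_n\in X$ with $\|x_n\|\le\tfrac1{4^n}$ and $p(f(x_n))\ge\e$. Setting $z_n:=2^nx_n$, additivity of $f$ on integer multiples yields $\|z_n\|\le\tfrac1{2^n}$ and $p(f(z_n))=2^np(f(x_n))\ge 2^n\e$, so $(z_n)_{n\in\w}$ is a null-sequence in $X$ along which the image $\{f(z_n):n\in\w\}\subset Y$ is unbounded (witnessed by the continuous seminorm $p$).

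By the bounded-sets characterization quoted above, the unboundedness of $\{f(z_n):n\in\w\}$ produces a continuous linear functional $y^*:Y\to\IR$ such that $\{y^*(f(z_n)):n\in\w\}$ is unbounded in $\IR$. The hypothesis supplies a subset $B\subset X$, not null-finite in $X$, on which the additive functional $y^*\circ f:X\to\IR$ is upper bounded; by Theorem~\ref{t1} this functional is therefore continuous, so $\bigl(y^*(f(z_n))\bigr)_{n\in\w}$ must converge to $0$ and be bounded in $\IR$, contradicting the previous sentence.

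The only ingredient beyond the proof of Corollary~\ref{c:Banach} is the duality characterization of boundedness, and I expect this to be the only point a careful reader might want expanded. If so, it can be justified in one line: if a subset $A\subset Y$ is unbounded in the original topology then it is unbounded under some continuous seminorm, so by the Hahn--Banach theorem it is already weakly unbounded, i.e.\ some $y^*\in Y^*$ is unbounded on $A$; equivalently, every weakly bounded subset of a locally convex space is bounded. No assumption beyond local convexity is used, which is exactly why the Banach-space hypothesis of Corollary~\ref{c:Banach} can be relaxed.
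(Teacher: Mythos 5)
Your argument is correct, but it takes a genuinely different route from the paper. The paper deduces Theorem~\ref{t:lc} from Corollary~\ref{c:Banach} in one step, using Schaefer's embedding theorem: every (Hausdorff) locally convex space is topologically isomorphic to a linear subspace of a product of Banach spaces, so one checks continuity of each Banach-valued coordinate $\pi_i\circ f$ via Corollary~\ref{c:Banach} and concludes by the definition of the product/subspace topology. You instead rerun the proof of Corollary~\ref{c:Banach} inside $Y$ itself, replacing the Banach--Steinhaus step by the Mackey-type theorem that a subset of a locally convex space is bounded iff it is weakly bounded. Both are legitimate: the paper's reduction is shorter on the page because Corollary~\ref{c:Banach} is already available, while your version is more self-contained (no structure theorem for locally convex spaces) and makes visible exactly which property of $Y$ is used, namely that the continuous functionals detect unboundedness. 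The construction of the null-sequence $z_n=2^nx_n$ with $\|z_n\|\le 2^{-n}$ and $p(f(z_n))\ge 2^n\e$, and the final contradiction via Theorem~\ref{t1}, are exactly as in the paper's Banach case and are fine.

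One caveat: your closing ``one-line justification'' of the key duality fact is not actually a proof. Hahn--Banach alone gives, for each $a$ with $p(a)$ large, \emph{some} functional $y_a^*$ dominated by $p$ with $y_a^*(a)=p(a)$; passing from this family to a \emph{single} $y^*\in Y^*$ unbounded on the set is precisely the nontrivial content of the theorem, and the standard proof needs a uniform boundedness argument in the Banach space spanned by the polar of a neighborhood (see, e.g., Rudin, \emph{Functional Analysis}, Theorem~3.18). Since the statement you invoke is classical and correctly quoted, this does not invalidate the proof --- but you should cite the theorem rather than claim it follows from Hahn--Banach in one line.
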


\begin{proof} This theorem follows from Corollary~\ref{c:Banach} and the well-known fact \cite[p.54]{Sch} that each locally convex space is topologically isomorphic to a linear subspace of a Tychonoff product of Banach spaces.
\end{proof}

Combining Theorem~\ref{t:lc} with Theorems~\ref{t:Hm} and \ref{t:Hn}, we derive two corollaries.

\begin{corollary} An additive function $f:X\to Y$ from a complete metric group $X$ to a locally convex space $Y$ is continuous if for any linear continuous functional $y^*:Y\to\IR$ the function $y^*\circ f:X\to\IR$ is upper bounded on some uBP-set $B\subset X$ which is not Haar-meager in $X$.
\end{corollary}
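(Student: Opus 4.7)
The plan is to derive this corollary directly by chaining Theorem~\ref{t:lc} with the contrapositive of Theorem~\ref{t:Hm}. Concretely, I would fix an arbitrary continuous linear functional $y^*:Y\to\IR$, take the uBP-set $B\subset X$ supplied by the hypothesis on which $y^*\circ f$ is upper bounded, and argue that $B$ cannot be null-finite in $X$.

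The key step is the following observation: Theorem~\ref{t:Hm} states that every null-finite uBP-set in a complete metric group is Haar-meager. Taking the contrapositive, any uBP-set which fails to be Haar-meager must fail to be null-finite. Since the set $B$ supplied by the hypothesis is a uBP-set which is not Haar-meager, it is automatically not null-finite.

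Having verified this for every continuous linear functional $y^*:Y\to\IR$, the hypothesis of Theorem~\ref{t:lc} is satisfied, and continuity of $f$ follows from that theorem. The argument is a routine combination, so I do not anticipate any genuine obstacle; the only thing to be careful about is to keep the quantification clean (the set $B$ is allowed to depend on the functional $y^*$, which is exactly what both the hypothesis of this corollary and the hypothesis of Theorem~\ref{t:lc} permit), so no uniform choice of $B$ is needed.
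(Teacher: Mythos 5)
Your proposal is correct and is exactly the argument the paper intends: the corollary is presented as an immediate combination of Theorem~\ref{t:lc} with the contrapositive of Theorem~\ref{t:Hm}, precisely as you chain them, and your remark that the set $B$ may depend on $y^*$ matches the quantification in both statements. Nothing further is needed.
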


\begin{corollary} An additive function $f:X\to Y$ from a complete metric group $X$ to a locally convex space $Y$ is continuous if for any linear continuous functional $y^*:Y\to\IR$ the function $y^*\circ f:X\to\IR$ is upper bounded on some universally measurable set $B\subset X$ which is not Haar-null in $X$.
\end{corollary}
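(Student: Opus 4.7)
The plan is to deduce this corollary as a direct combination of Theorem~\ref{t:lc} (the locally convex version of the continuity criterion) with Theorem~\ref{t:Hn} (universally measurable null-finite sets in complete metric groups are Haar-null). The whole argument is essentially a contrapositive bridge between the two hypotheses, with no further technical work needed.

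First I would fix an arbitrary continuous linear functional $y^*:Y\to\IR$ and, invoking the hypothesis of the corollary, choose a universally measurable set $B\subset X$ on which $y^*\circ f$ is upper bounded and which is not Haar-null in $X$. Next I would apply the contrapositive of Theorem~\ref{t:Hn}: since $B$ is universally measurable and not Haar-null, $B$ cannot be null-finite in $X$. This is precisely the information required to feed into Theorem~\ref{t:lc}. Since $y^*$ was arbitrary, for every continuous linear functional $y^*:Y\to\IR$ the composition $y^*\circ f$ is upper bounded on some set which is not null-finite in $X$. Theorem~\ref{t:lc} then yields continuity of $f$.

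There is no real obstacle here; the only thing worth checking is that the two notions of ``smallness'' align correctly under the contrapositive, and they do, since Theorem~\ref{t:Hn} requires exactly the universally measurable regularity hypothesis assumed on $B$. One could alternatively phrase the proof as a single chain: ``not Haar-null $\Rightarrow$ not null-finite $\Rightarrow$ applicable to Theorem~\ref{t:lc}'', but splitting it into the two bullets above makes the logical structure transparent and mirrors exactly how the analogous Haar-meager corollary (stated just above) is deduced from Theorem~\ref{t:lc} and Theorem~\ref{t:Hm}.
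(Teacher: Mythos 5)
Your proposal is correct and matches the paper's intended argument exactly: the paper states this corollary as an immediate consequence of combining Theorem~\ref{t:lc} with Theorem~\ref{t:Hn}, which is precisely the contrapositive bridge (universally measurable and not Haar-null $\Rightarrow$ not null-finite) that you describe. Nothing further is needed.
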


\section{Applying null-finite sets to mid-point convex functions}\label{s:Jensen}

In this section we apply null-finite sets to establish a continuity criterion for mid-point convex functions on linear metric spaces.

A function $f:C\to \IR$ on a convex subset $C$ of a linear space is called {\em mid-point convex} if $$f(\tfrac{x}2+\tfrac{y}2)\le\tfrac12{f(x)}+\tfrac12{f(y)}$$ for any $x,y\in C$. Mid-point convex functions are alternatively called {\em Jensen convex}.

\begin{theorem}\label{t:J} A mid-point convex function $f:G\to\IR$ defined on an open convex subset $G\subset X$ of a metric linear space $X$ is continuous if and only if $f$ is upper bounded on some  set $B\subset G$ which is not null-finite in $X$ and whose closure $\overline{B}$ is contained in $G$.
\end{theorem}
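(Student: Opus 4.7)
I would prove the ``only if'' direction first; it is short. If $f$ is continuous on $G$, take any $x_0 \in G$ and choose $r>0$ small enough that the closed ball $B:=\{x\in X:\|x-x_0\|\le r\}$ is contained in $G$. By continuity, $f$ is bounded on $B$, and $B$ has non-empty interior, so by Proposition~\ref{p:Fempty} it is not null-finite; moreover $\overline B=B\subset G$ by construction.

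For the ``if'' direction, suppose $f$ is upper bounded by some $M\in\IR$ on a subset $B\subset G$ that is not null-finite and satisfies $\overline B\subset G$. My plan is to reduce to the classical generalization of the Bernstein--Doetsch theorem mentioned in the introduction---``$f$ mid-point convex on $G$ and upper bounded on a non-empty open subset of $G$ implies continuity of $f$ on $G$''---by establishing that $f$ is in fact upper bounded on some non-empty open subset of $G$.

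The two main tools are mid-point convexity and the Steinhaus-type Theorem~\ref{t:Steinhaus}. Mid-point convexity propagates the bound $M$ from $B$ to every dyadic convex combination of points of $B$: by iterating $f(\frac{x+y}{2})\le\frac{1}{2}(f(x)+f(y))$ one obtains $f\le M$ on $\frac{1}{2}(B+B)$, on $\frac{1}{4}(B+B+B+B)$, and so on. On the other hand, Theorem~\ref{t:Steinhaus}(2) provides, for any neighborhood $U$ of $\theta$, a finite set $F\subset U$ such that $F+(B-B)$ is a neighborhood of $\theta$. The idea is to combine these, together with the ``reflection'' consequence of mid-point convexity
\[
f(2b_0-b)\ \ge\ 2f(b_0)-f(b)\qquad (b\in B,\ b_0\in B),
\]
to extract a non-empty open subset of $G$ on which $f$ is upper bounded by some constant.

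The main obstacle I foresee lies in this bridging step. Mid-point convexity bounds $f$ above on midpoint \emph{sums} $\frac{1}{2}(B+B)$, while Theorem~\ref{t:Steinhaus}(2) supplies a neighborhood of $\theta$ inside a translated \emph{difference} $F+(B-B)$. Passing between the two requires either (i) expressing an element $b_0+e+b_1-b_2$ (with $e\in F$, $b_1,b_2\in B$) as a $2^n$-fold dyadic average of elements of $B$, so that the bound $M$ transfers directly by mid-point convexity; or (ii) using the reflection identity to convert differences into sums $b_1+(2b_0-b_2)$, at the price of having to control $f$ on $2b_0-B$, where mid-point convexity only gives a \emph{lower} bound. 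I expect (i) to be the cleaner route in a linear space: once one verifies that the set of such $2^n$-fold dyadic averages has non-empty interior---which should follow from the non-null-finiteness of $B$ via Theorem~\ref{t:Steinhaus} (possibly iterated)---the hypotheses of the classical Bernstein--Doetsch theorem are satisfied and continuity of $f$ on $G$ follows.
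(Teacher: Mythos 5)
Your ``only if'' direction is fine. The ``if'' direction, however, has a genuine gap, and it is located exactly where you flag ``the main obstacle'': the bridge from the difference set $F+(B-B)$ supplied by Theorem~\ref{t:Steinhaus}(2) to an upper bound for $f$ on an open set. Route (i) asks that a translated difference $b_0+e+b_1-b_2$ be realized as a $2^n$-fold dyadic average of points of $B$, i.e.\ it effectively needs the \emph{sum} sets $\frac1{2^n}(B+\dots+B)$ to have non-empty interior. Non-null-finiteness gives no such thing: Theorem~\ref{t:Steinhaus} controls only $A-\bar A$ and $F+(A-A)$, and the paper's introduction recalls (Matou\v skov\'a--Zelen\'y) that the additive Steinhaus property fails badly in non-locally compact spaces --- there are closed sets that are neither Haar-null nor Haar-meager (e.g.\ the positive cone in $\IR^\w$) whose sum has empty interior. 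This failure is precisely why the Baron--Ger problem was open, so any argument whose engine is ``dyadic averages of $B$ have non-empty interior'' cannot work at the stated generality. Route (ii) you already concede yields only a \emph{lower} bound on $2b_0-B$, which is useless for bounding $f$ above. So neither branch closes the argument, and reducing to Bernstein--Doetsch/Mehdi is not achievable along these lines.

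The paper's proof avoids sums and differences of $B$ altogether and uses the definition of null-finiteness directly. After normalizing so that continuity is tested at $\theta$ with $f(\theta)=0$ (whence $f(2^{-n}x)\le 2^{-n}f(x)$, i.e.\ $f(2^nx)\ge 2^nf(x)$ when the relevant points stay in $G$), discontinuity produces points $x_n$ with $\|x_n\|\le 4^{-n}$, $x_n\in(2^{-n}G)\cap(-2^{-n}G)$ and $f(x_n)\ge\e$. Then $(2^nx_n)_{n\in\w}$ is a null-sequence, and since $B$ is not null-finite there is an $a$ with $a+2^nx_n\in B$ for infinitely many $n$; the hypothesis $\overline B\subset G$ guarantees $a\in G$. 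A single two-point mid-point estimate, writing $2^{n-k}x_n$ as the midpoint of $-2^{-k+1}a$ and $2^{-k+1}a+2^{n-k+1}x_n$, converts the growth $f(2^{n-k}x_n)\ge 2^{n-k}\e$ into $\sup f(B)=\infty$, contradicting the upper bound on $B$. If you want to salvage your outline, you would need to replace the Steinhaus-type step by this kind of direct use of a null-sequence built from the putative discontinuity.
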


\begin{proof} The ``only if'' part is trivial. To prove the ``if'' part, assume that $f$ is upper bounded on some set $B\subset G$ such that $\overline{B}\subset G$ and $B$ is not null-finite in $X$. We need to check the continuity of $f$ at any point $c\in G$. Shifting the set $G$ and the function $f$ by $-c$, we may assume that $c=\theta$. Also we can replace the function $f$ by $f-f(\theta)$ and assume that $f(\theta)=0$. In this case the mid-point convexity of $f$ implies that $f(\frac1{2^n}x)\le \frac1{2^n}f(x)$ for every $x\in G$ and $n\in\IN$.

To derive a contradiction, suppose that the function  $f$ is not
continuous at $\theta$.  Then there exists $\e>0$ such that $f(U)\not\subset (-\e,\e)$ for any open neighborhood $U\subset G$.
For every $n\in\w$ consider the neighborhood $U_n:=\{x\in G:\|x\|\le \frac1{4^n},\;x\in (\frac1{2^n}G)\cap(-\frac1{2^n}G)\}$ and find a point $x_n\in U_n$ such that $|f(x_n)|\ge\e$. Replacing $x_n$ by $-x_n$, if necessary, we can assume that $f(x_n)\ge\e$ (this follows from the inequality $\frac12(f(x_n)+f(-x_n))\ge f(\theta)=0$ ensured by the mid-point convexity of $f$).

Since $x_n\in\frac1{2^n}G$, for every $k\le n$ the point $2^kx_n$ belongs to $G$ and has prenorm $\|2^kx_n\|\le 2^k\|x_n\|\le\frac{2^{n}}{4^n}=\frac1{2^n}$ in the metric linear space $(X,\|\cdot\|)$. This implies that $(2^nx_n)_{n\in\w}$ is a null-sequence in $X$. Since the set $B$ is not null-finite, there exists $a\in X$ such that the set $\Omega:=\{n\in\IN:a+2^nx_n\in B\}$ is infinite. Then $a\in\overline{B}\subset G$. Choose $k\in\IN$ so large that $2^{-k+1}a\in -G$ (such number $k$ exists as $-G$ is a neighborhood of $\theta$).

Observe that for every number $n\in\Omega$ with $n>k$ the mid-point convexity of $f$ ensures that $$
\begin{aligned}
2^{n-k}\e&\le 2^{n-k}f(x_n)\le f(2^{n-k}x_n)=f\big(\tfrac{-2^{-k+1}a}2+\tfrac{2^{-k+1}a+2^{n-k+1}x_n}2\big)\le\\
&\le\tfrac12f(-2^{-k+1}a)+\tfrac12f(2^{-k+1}a+2^{n-k+1}x_n)\le\tfrac12f(-2^{-k+1}a)+2^{-k}f(a+2^nx_n)
\end{aligned}
$$and hence $$\sup f(B)\ge \sup_{n\in\Omega}f(a+2^nx_n)\ge\sup_{k< n\in\Omega}(2^{n}\e-2^{k-1}f(-2^{-k+1}a))=\infty,$$
which contradicts the upper boundedness of $f$ on $B$.
\end{proof}

Theorem~\ref{t:J} implies the following generalization of the classical Bernstein-Doetsch Theorem \cite{BD}, due to Mehdi \cite{Mehdi}; see also a survey paper of Bingham and Ostaszewski \cite{BO2}.

\begin{corollary} A mid-point-convex function $f:G\to \IR$ defined on an open convex subset $G$ of a metric linear space $X$ is continuous if and only if $f$ is upper-bounded on some non-empty open subset of $G$.
\end{corollary}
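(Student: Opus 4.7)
The plan is to derive the corollary as an immediate application of Theorem~\ref{t:J}. The ``only if'' direction is standard: any function continuous at a point of a metric space is locally bounded there, so a continuous $f$ is upper bounded on a small neighborhood of any chosen point of $G$, which is a non-empty open subset of $G$.

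For the ``if'' direction, suppose $f$ is upper bounded on some non-empty open subset $U\subset G$. Pick any $u\in U$. Since $U$ is a neighborhood of $u$ in the metric linear space $(X,\|\cdot\|)$, I would choose $r>0$ small enough that the closed ball $\{x\in X:\|x-u\|\le r\}$ is contained in $U$. Setting $B:=\{x\in X:\|x-u\|<r\}$, the closure $\overline{B}$ is contained in that closed ball, hence in $U\subset G$; moreover $f$ is upper bounded on $B\subset U$.

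It remains to verify that $B$ is not null-finite in $X$, and this is where Proposition~\ref{p:Fempty} enters. Applied with the singleton $F=\{\theta\}$, that proposition says that every null-finite set in a metric group has empty interior in $X$. Since $B$ is non-empty and open in $X$, it is therefore not null-finite. Now Theorem~\ref{t:J} applies to the set $B\subset G$ and yields the continuity of $f$ on $G$.

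There is no real technical obstacle; the corollary is essentially a direct reformulation of Theorem~\ref{t:J} once one records that non-empty open sets can never be null-finite (Proposition~\ref{p:Fempty}) and that inside the open set $G$ one can always shrink a given non-empty open witness $U$ to a smaller open ball whose closure still lies in $G$.
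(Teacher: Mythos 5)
Your proof is correct and is exactly the argument the paper intends (the paper leaves it implicit, stating only that the corollary follows from Theorem~\ref{t:J}): a non-empty open set has non-empty interior and hence is not null-finite by Proposition~\ref{p:Fempty} with $F=\{\theta\}$, and shrinking to a ball whose closure lies in $G$ puts you in the hypotheses of Theorem~\ref{t:J}. Nothing is missing.
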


Combining Theorem~\ref{t:J} with Theorems~\ref{t:Hm} and \ref{t:Hn}, we obtain the following two continuity criteria, which answer the Problem of Baron and Ger \cite[P239]{BG}.

\begin{corollary}\label{c0} A mid-point convex function $f:G\to\IR$ defined on a convex subset $G$ of a complete metric linear space $X$ is continuous if and only if it is upper bounded on some uBP-set $B\subset G$ which is not Haar-meager in $X$.
\end{corollary}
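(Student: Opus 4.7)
The plan is to combine Theorem~\ref{t:J} with Theorem~\ref{t:Hm}. The forward implication is straightforward: continuity of $f$ at any $x_0\in G$ yields an open neighborhood $B\subset G$ of $x_0$ on which $f$ is bounded; this $B$ is Borel (hence uBP) and, having non-empty interior, cannot be Haar-meager in $X$.

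For the converse, assume $f$ is upper bounded on a uBP-set $B\subset G$ which is not Haar-meager in $X$. I would like to feed such a $B$ into Theorem~\ref{t:J}, which requires a set that is (i) not null-finite in $X$ and (ii) whose closure lies in $G$. Condition (i) will come for free from the contrapositive of Theorem~\ref{t:Hm} once we secure a uBP non-Haar-meager witness. The main obstacle is condition (ii): the given $B$ need not satisfy $\overline{B}\subset G$.

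To overcome this I would exhaust the open set $G$ by a countable increasing family of closed-in-$X$ subsets $F_n\subset G$, for example the set of those points of $G$ of prenorm at most $n$ and at distance at least $1/n$ from $X\setminus G$. Since Darji's family $\HM_X$ of Haar-meager sets is a $\sigma$-ideal and $B=\bigcup_{n\in\w}(B\cap F_n)$, at least one slice $B':=B\cap F_n$ is not Haar-meager in $X$. This $B'$ is still a uBP-set (uBP-sets form a $\sigma$-algebra), its closure is contained in $F_n\subset G$, and by Theorem~\ref{t:Hm} it is not null-finite in $X$. Applying Theorem~\ref{t:J} to $B'\subset B$, on which $f$ remains upper bounded, then delivers the continuity of $f$.
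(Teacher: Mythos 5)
Your proposal is correct and follows essentially the same route as the paper: exhaust the open set $G$ by closed-in-$X$ sets $F_n$, use the $\sigma$-ideal property of Haar-meager uBP-sets to find a non-Haar-meager slice $B\cap F_n$ with closure in $G$, invoke Theorem~\ref{t:Hm} to see it is not null-finite, and finish with Theorem~\ref{t:J}. Your explicit construction of the $F_n$ and your remark that uBP-sets are closed under intersection with Borel sets merely spell out details the paper leaves implicit.
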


\begin{proof}
Assume that $f$ is upper bounded on some uBP-set $B\subset G$ which is not Haar-meager in $X$. 

Write the open set $G$ of $X$ as the union $U=\bigcup_{n\in\w}F_n$ of closed subsets of $X$. By \cite{Darji}, the countable union of uBP Haar-meager sets is Haar-meager. Since the set $B=\bigcup_{n\in\w}B\cap F_n$ is not Haar-meager, for some $n\in\IN$ the subset $B\cap F_n$ of $B$ is not Haar-meager. By Theorem~\ref{t:Hm}, $B\cap F_n$ is not null-finite. Since $\overline{B\cap F_n}\subset F_n\subset G$ and $f$ is upper bounded on $B\cap F_n$, we can apply Theorem~\ref{t:J} and conclude that
the mid-point convex function $f$ is continuous.
\end{proof}

\begin{corollary} A mid-point convex function $f:G\to\IR$ defined on a convex subset $G$ of a complete metric linear space $X$ is continuous if and only if it is upper bounded on some universally measurable set $B\subset G$ which is not Haar-null in $X$.
\end{corollary}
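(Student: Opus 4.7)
The plan is to mimic the proof of Corollary~\ref{c0} almost verbatim, substituting the Haar-null tools for the Haar-meager ones; the backward (``if'') direction is where the content lies. The forward direction is immediate: a continuous $f$ is upper bounded on any small open ball $U$ contained in $G$, and any non-empty open subset $U \subset X$ is universally measurable and not Haar-null — a witnessing probability measure $\mu$ with $\mu(U+x)=0$ for every $x \in X$ would, via countably many translates of $U$ covering a separable dense subgroup of $X$, force $\mu(X)=0$.

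For the ``if'' direction, assume $f$ is upper bounded on a universally measurable set $B \subset G$ which is not Haar-null in $X$. Since the open set $G$ of the metric space $X$ is $F_\sigma$, I would write $G = \bigcup_{n \in \w} F_n$ with each $F_n$ closed in $X$. Each intersection $B \cap F_n$ is then universally measurable, since $F_n$ is Borel and the universally measurable sets form a $\sigma$-algebra. Using Christensen's theorem~\cite{Ch} that $\HN_X$ is a $\sigma$-ideal in the complete metric Abelian group $X$, and the decomposition $B = \bigcup_{n \in \w}(B \cap F_n)$, I would pick $n \in \w$ with $B \cap F_n$ not Haar-null.

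Now I would invoke Theorem~\ref{t:Hn}: the universally measurable set $B \cap F_n$ is not Haar-null, hence cannot be null-finite in $X$. Since $\overline{B \cap F_n} \subset F_n \subset G$ and $f$ is upper bounded on $B \cap F_n$, the hypotheses of Theorem~\ref{t:J} are met, so $f$ is continuous.

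There is no real obstacle here, as the heavy lifting has been front-loaded into Theorem~\ref{t:Hn} (universally measurable null-finite sets are Haar-null, proved by a product-measure computation on a compact box in $X$) and Theorem~\ref{t:J} (the null-finite Bernstein--Doetsch criterion). The only delicate point to track is that $B \cap F_n$ stays universally measurable and that one is entitled to the $\sigma$-additivity of the Haar-null ideal — this is Christensen's theorem, and it plays exactly the role that the parallel $\sigma$-additivity of Haar-meager sets plays in the proof of Corollary~\ref{c0}.
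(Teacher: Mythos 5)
Your proposal is correct and follows exactly the route the paper takes: its proof of this corollary consists of the single remark that the argument of Corollary~\ref{c0} goes through verbatim with ``Haar-meager/uBP/Theorem~\ref{t:Hm}'' replaced by ``Haar-null/universally measurable/Theorem~\ref{t:Hn}'', using Christensen's $\sigma$-additivity of the Haar-null ideal, which is precisely your decomposition $G=\bigcup_n F_n$, the selection of a non-Haar-null $B\cap F_n$, and the application of Theorems~\ref{t:Hn} and~\ref{t:J}. (Only a cosmetic point: in the trivial ``only if'' direction it is cleaner to see that a non-empty open set is not Haar-null by covering a compact set of positive $\mu$-measure with finitely many translates of $U$, rather than covering a dense subgroup, since translates covering a dense subgroup need not cover $X$ unless one first notes $X=D+U$.)
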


\begin{proof} The proof of this corollary runs in exactly the same way as the proof of Corollary~\ref{c0} and uses the well-known fact \cite{Ch} that the countable union of universally measurable Haar-null sets in a complete metric group is Haar-null.
\end{proof}

\section{Some Open Problems}\label{s:prob}

In this section we collect some open problems related to null-finite sets.

It is well-known that for a locally compact Polish group $X$, each Haar-meager set in $X$ can be enlarged to a Haar-meager $F_\sigma$-set and each Haar-null set in $X$ can be enlarged to a Haar-null $G_\delta$-set in $X$. Those ``enlargement'' results dramatically fail for non-locally compact Polish groups, see \cite{EVid}, \cite{DV}. We do not know what happens with null-finite sets in this respect.

\begin{problem} Is each Borel null-finite subset $A$ of a (compact) Polish group $X$ contained in a null-finite set $B\subset X$ of low Borel complexity?
\end{problem}

By \cite{Solecki} (resp. \cite[4.1.6]{EN}), each analytic Haar-null (resp. Haar-meager) set in a Polish group is contained in a Borel Haar-null (resp. Haar-meager) set. On the other hand, each non-locally compact Polish group contains a coanalytic Haar-null (resp. Haar-meager) set which cannot be enlarged to a Borel Haar-null (resp. Haar-meager) set, see \cite{EVid} (resp. \cite{DV}).

\begin{problem} Is each (co)analytic null-finite set $A$ in a Polish group $X$ contained in a Borel null-finite set?
\end{problem}

Our next problem ask about the relation of the $\sigma$-ideal $\sigma\NF_X$ generated by null-finite sets to other known $\sigma$-ideals.

\begin{problem}\label{prob4} Let $X$ be a Polish group.
\begin{enumerate}
\item Is $\sigma\overline{\HN}_X\subset\sigma\NF_X$?
\item Is $\sigma\NF_X=\HN_X\cap\HM_X$?
\end{enumerate}
\end{problem}

The negative answer to both parts of Problem~\ref{prob4} would follow from the negative answer to the following concrete question.

\begin{problem} Can the closed Haar-null set $\prod_{n=2}^\infty C_n^*$ in the group $X=\prod_{n=2}^\infty C_n$ be written as a countable union of Borel null-finite sets in $X$?
\end{problem}


\begin{problem} For an infinite Polish group $X$ and an ideal $\I\in\{\sigma\overline{\NF}_X,\sigma\NF_X\}$ evaluate the standard cardinal characteristics of $\I$:
\begin{itemize}
\item $\add(\I)=\min\{|\J|:\J\subset\I$ and $\bigcup\J\notin\I\}$;
\item $\non(\I)=\min\{|A|:A\subset X$ and $A\notin\I\}$;
\item $\cov(\I)=\min\{|\J|:\J\subset\I$ and $\bigcup\J=X\}$;
\item $\cof(\I)=\min\{|\J|:\forall I\in\I\;\exists J\in\J\;\;(I\subset J)\}$.
\end{itemize}
\end{problem}

The cardinal characteristics of the $\sigma$-ideals $\HN_X$, $\HM_X$, $\sigma\overline{\HN}_X$ on Polish groups are well-studied \cite{BaJu}, \cite{Ban04} and play an important role in Modern Set Theory.
\newpage

\end{document}